\date{}
\newcommand{\xnun}{X^{(n)}_{\nu}}
\newcommand{\xmun}{X^{(n)}_{\mu}}
\newcommand{\Xmun}{\mathcal{X}^{(n)}_{\mu}}
\newcommand{\Xnun}{\mathcal{X}^{(n)}_{\nu}}
\newcommand{\dH}{\gamma}
\newcommand{\pp}{\mathsf{p}}
\newcommand{\cc}{\mathsf{c}}
\newcommand{\ff}{\cc}
\newcommand{\CRT}{\mathcal{T}}
\newcommand{\E}{\mathbb{E}}
\newcommand{\R}{{\mathbb{R}}}
\newcommand{\T}{\mathcal{T}}
\newcommand{\N}{\mathbb{N}}
\newcommand{\veps}{\varepsilon}
\newcommand{\pr}{\mathbb{P}}
\newcommand\numberthis{\addtocounter{equation}{1}\tag{\theequation}}
\renewcommand{\geq}{\geqslant}
\renewcommand{\leq}{\leqslant}
\newcommand{\tbr}{\mathcal{T}}
\def\llbracket{[\hspace{-.10em} [ }
\def\rrbracket{ ] \hspace{-.10em}]}
\newcommand{\dd}{\mathrm{d}}
\newcommand{\sT}{\mathsf{T}} %set of trees
\newcommand{\grow}{\operatorname{grow}}
\newcommand{\Cat}{\operatorname{Cat}}
\newcommand{\h}{\mathrm{ht}}
\renewcommand{\theequation}{\arabic{equation}}
\newtheorem{thm}{Theorem}[section]
\newtheorem{defn}[thm]{Definition}
\newtheorem{lem}[thm]{Lemma}
\newtheorem{prop}[thm]{Proposition}
\newtheorem{cor}[thm]{Corollary}
\newtheorem{rem}[thm]{Remark}
\date{}
\title{\bf Where do (random) trees grow leaves?}
\author{Alessandra Caraceni\thanks{Scuola Normale Superiore di Pisa, \url{alessandra.caraceni@sns.it}}, \hspace{0.2cm} Nicolas Curien\thanks{Universit\'e Paris-Saclay, \url{nicolas.curien@gmail.com}}\quad \& \hspace{0.2cm} Robin Stephenson\thanks{School of Mathematics and Statistics, University of Sheffield, \url{robin.stephenson@normalesup.org}}}
\begin{document}
\maketitle
\vspace{-1cm}
\abstract{We study a model of random binary trees grown ``by the leaves" in the style of Luczak and Winkler~\cite{LW04}. If $\tau_n$ is a uniform plane binary tree of size $n$, Luczak and Winkler, and later explicitly Caraceni and Stauffer, constructed a measure $\nu_{\tau_n}$ such that the tree obtained by adding a cherry on a leaf sampled according to $\nu_{\tau_n}$ is still uniformly distributed on the set of all plane binary trees with size $n+1$. It turns out that the measure $\nu_{\tau_n}$, which we call the leaf-growth measure, is noticeably different from the uniform measure on the leaves of the tree $\tau_n$. In fact we prove that, as $n \to \infty$, with high probability it is almost entirely supported by a subset of only
$$ n^{3 ( 2 - \sqrt{3})+o(1)} \approx n^{0.8038...} \mbox{ leaves}.$$
In the continuous setting, we construct the scaling limit of uniform binary trees equipped with this measure, which is the Brownian Continuum Random Tree equipped with a probability measure supported by a fractal set of dimension $ 6 (2 - \sqrt{3})$. We also compute the full (discrete) multifractal spectrum. This work is a first step towards understanding the diffusion limit of the discrete leaf-growth procedure. }

\begin{figure}[h!]
    \centering
    \includegraphics[width=.7\textwidth]{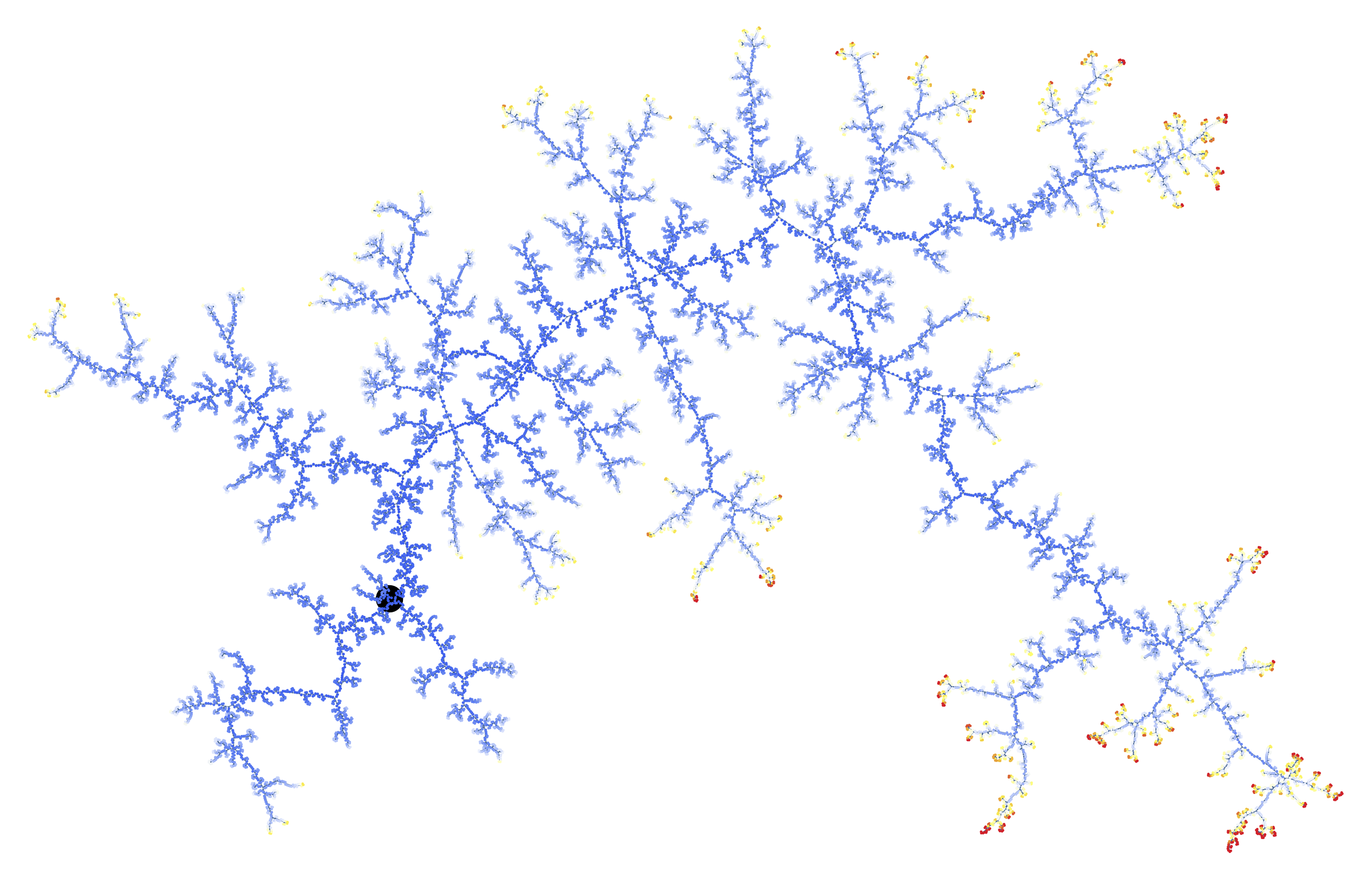}
    \caption{A uniform plane binary tree $\tau_n$ with 20 000 edges decorated with its leaf-growth measure $\nu_{\tau_n}$: the black disk represents the root vertex; and the color of the other  vertices (leaves) displays their $\nu_{\tau_n}$-mass (blue for small probability and red for high probability).
    }
    \label{fig:intro growing tree}
\end{figure}

\section{Introduction}

Within the very broad and far-reaching topic of randomly generated trees, models involving some sort of growth procedure arise very naturally and lead to a myriad of interesting questions and results. Well-known examples include random recursive trees and preferential attachment trees (e.g., the Barabasi--Albert model), as well as many tree models arising as data structures (binary search trees, $d$-ary trees, quadtrees, tries, etc.). We shall focus here on iterative growth procedures which, when performed up to size $n$, yield a tree that is uniformly distributed within the set of all size-$n$ plane $d$-ary trees. One famous such procedure is given by Rémy's algorithm and its variants~\cite{Rem85,Mar03,HS14}, but others include~\cite{marckert2023growing}.

In this paper, we shall consider a model of growth ``by the leaves'' in the style of what Luczak and Winkler introduced in~\cite{LW04}. Their question was the following: letting $\tau^{(d)}_n$ be a uniformly random plane $d$-ary tree with $n$ internal vertices (every vertex has either $d$ or $0$ children), is it possible to couple $\tau^{(d)}_n$ and $\tau^{(d)}_{n+1}$ in such a way that the latter is obtained from the former by adding $d$ children to a leaf of $\tau^{(d)}_n$ (see Figure~\ref{fig:leaf growth})? They answer the question in the affirmative, although their proof does not yield an explicit way of selecting a random leaf $l$, conditionally on $\tau^{(d)}_n$, in such a way that giving it $d$ new children will yield a uniform tree of size $n+1$. On top of its theoretical beauty, this result has had significant applications to stochastic domination of random trees, which was for example used to study planar maps \cite{addario2014growing, CS23}, the parking model on trees  \cite{goldschmidt2019parking}, and the number of spanning trees in the Erd\"os--R\'enyi random graph \cite{lyons2008growth}.

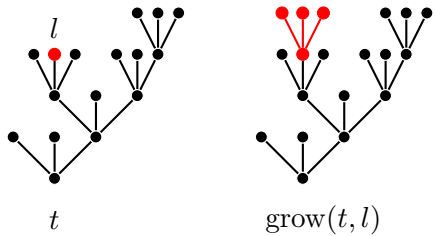
\begin{figure}[h!] \centering 
\begin{tikzpicture}[vertex/.style={circle, inner sep=1.6pt, draw=white, fill=black}, gem/.style={diamond, inner sep=1.7pt, fill=green!70!black}, branch/.style={very thick, brown},blossom/.style={green!70!black}, simple/.style={thick}, scale=1.1]
	\begin{pgfonlayer}{nodelayer}
		\node (0) at (5.75,3.25) {$t$};
		\node [style=vertex] (1) at (6.75, 4.75) {};

		\node [style=vertex] (3) at (8.25, 4.25) {};

		\node [style=vertex] (5) at (10, 5.25) {};

		\node [style=vertex] (7) at (6, 5.25) {};
		\node [style=vertex] (8) at (6.25, 4.25) {};
		\node [style=vertex, red, label=above:$l$] (9) at (5.75, 5.25) {};
		\node [style=vertex, red] (11) at (8.5, 5.75) {};
		\node [style=vertex] (12) at (10, 5.75) {};
		\node [style=vertex] (13) at (7, 5.75) {};
		\node [style=vertex] (14) at (7, 5.25) {};
		\node [style=vertex, red] (15) at (8.75, 5.25) {};
		\node [style=vertex] (16) at (9, 5.25) {};
		\node [style=vertex] (17) at (6.75, 5.75) {};
		\node [style=vertex] (21) at (10.25, 5.75) {};
		\node (23) at (9, 3.25) {$\grow(t,l)$};
		\node [style=vertex] (24) at (6.25, 4.75) {};
		\node [style=vertex] (25) at (6.75, 5.25) {};
		\node [style=vertex] (26) at (9.75, 4.75) {};

		\node [style=vertex] (28) at (6.5, 5.25) {};
		\node [style=vertex] (29) at (8.75, 4.75) {};
		\node [style=vertex] (30) at (5.75, 4.75) {};
		\node [style=vertex] (31) at (8.5, 5.25) {};

		\node [style=vertex] (33) at (5.75, 4.25) {};
		\node [style=vertex] (34) at (5.75, 3.75) {};
		\node [style=vertex, red] (35) at (8.75, 5.75) {};
		\node [style=vertex, red] (36) at (9, 5.75) {};
		\node [style=vertex] (37) at (9.25, 4.25) {};
		\node [style=vertex] (38) at (8.75, 4.25) {};
		\node [style=vertex] (39) at (9.75, 5.25) {};
		\node [style=vertex] (40) at (7.25, 5.75) {};
		\node [style=vertex] (41) at (9.25, 4.75) {};
		\node [style=vertex] (42) at (5.25, 4.25) {};
		\node [style=vertex] (43) at (9.5, 5.25) {};
		\node [style=vertex] (44) at (5.5, 5.25) {};
		\node [style=vertex] (45) at (9.75, 5.75) {};
		\node [style=vertex] (46) at (8.75, 3.75) {};

	\end{pgfonlayer}
	\begin{pgfonlayer}{edgelayer}
		\draw [style=simple] (29) to (37);
		\draw [style=simple] (43) to (26);
		\draw [style=simple] (45) to (5);
		\draw [style=simple] (31) to (29);
		\draw [style=simple] (44) to (30);
		\draw [style=simple, red] (11) to (15);
		\draw [style=simple] (13) to (14);
		\draw [style=simple] (41) to (37);
		\draw [style=simple] (8) to (34);
		\draw [style=simple] (30) to (8);
		\draw [style=simple] (5) to (26);
		\draw [style=simple, red] (35) to (15);
		\draw [style=simple] (38) to (46);
		\draw [style=simple] (28) to (1);
		\draw [style=simple] (1) to (8);
		\draw [style=simple] (25) to (1);
		\draw [style=simple] (16) to (29);
		\draw [style=simple] (24) to (8);
		\draw [style=simple] (3) to (46);
		\draw [style=simple] (15) to (29);
		\draw [style=simple] (17) to (14);
		\draw [style=simple] (21) to (5);

		\draw [style=simple] (12) to (5);

		\draw [style=simple] (40) to (14);
		\draw [style=simple] (37) to (46);
		\draw [style=simple] (7) to (30);

		\draw [style=simple] (42) to (34);
		\draw [style=simple] (26) to (37);

		\draw [style=simple] (9) to (30);

		\draw [style=simple] (14) to (1);
		\draw [style=simple] (33) to (34);
		\draw [style=simple] (39) to (26);
		\draw [style=simple, red] (36) to (15);
	\end{pgfonlayer}
\end{tikzpicture}
\caption{\label{fig:leaf growth}A plane ternary tree of size 5 (i.e., with 5 internal vertices and thus $3+2\cdot 4 =11$ leaves) is grown at the leaf $l$ by adding 3 children to $l$, thus creating the tree $\grow(t,l)$ of size 6.}
\end{figure}

In the case $d=2$ of plane binary trees, however, such a coupling between $\tau_n$ and $\tau_{n+1}$ -- where $d$ is henceforth omitted -- can be made very explicit (and in fact it is unique provided some natural symmetries are enforced). Given a uniform plane binary tree $\tau_n$ of \textbf{size} $n$ (i.e., with $n$ vertices that are \emph{not} leaves, and hence with $n+1$ leaves), one can produce an explicit measure $\nu_{\tau_n}$ on the set of the $n+1$ leaves of $\tau_n$ with the following property. If, conditionally on $\tau_n$, one samples $l$ according to $\nu_{\tau_n}$, then the tree $\operatorname{grow}(\tau_n,l)$ obtained by attaching a ``cherry" (two sister leaves) to $l$ is uniformly distributed among all plane binary trees with $n+1$ vertices. See Figure~\ref{fig:leaf growth}.

\paragraph{Construction of the leaf-growth measures.}

The (random) measure $\nu_{\tau_n}$, which we call the  \emph{leaf-growth} measure, is explicit as a function of $\tau_n$. Precisely, given $\tau_n=t$, and given any leaf $l$ of $t$, the mass given to $l$ is 
\begin{eqnarray} \label{eq:nunavecproduct}\nu_t(l)=\prod_{i=1}^{|l|}C(a_i,b_i), \end{eqnarray} 
where
\begin{itemize}
    \item $|l|$ is the height of $l$ in $t$;
    \item if $v_i$ is the ancestor of $l$ at height $i$, then $a_i$ is the size of the subtree of descendants of $v_i$ (including $v_i$ itself);
    \item $b_i=a_{i-1}-a_{i}-1$ (setting $a_0=n$) is the size of the subtree of descendants of the \emph{sibling} of $v_i$;
    \item $C(a,b)$ is the explicit rational function $$C(a,b) := \frac{(a+1)(2a+1)(a+3b+3)}{(a+b+1)(a+b+2)(2(a+b)+3)}.$$
\end{itemize}
See \cite{CS20} for details. In fact, there is a nice combinatorial interpretation to the expression $C(a,b)$ based on a ``best-of-three" match (as in plenty of competitive games and sports) which begs for a bijective explanation for the function $C$, see Remark \ref{rem:best of three}. 
The full details of the construction are given in Section~\ref{section:building schemes}.

\paragraph{Fractal properties of the leaf-growth measures.}
In this paper, we intend to investigate the nature of the measure $\nu_{\tau_n}$: how much does it differ from the uniform measure on the set of all leaves of $\tau_n$? In other words, how ``spread out'' do we expect it to be among all possible leaves?

We provide a rather detailed and explicit answer to this question. We shall prove that, with high probability, the measure $\nu_{\tau_n}$ is actually supported by a ``small'' set of leaves, whose size is of order $n^{3(2-\sqrt{3}) +o(1)}$. Indeed, in Section~\ref{section:discrete Hausdorff} we will show the following: 

\begin{thm}\label{thm:discretefractal}
Let $\tau_n$ be a uniformly distributed random binary tree of size $n$ and let $\nu_{\tau_n}$ be its leaf-growth measure; set 
$$\ \dH=3(2-\sqrt{3}).$$ 
For all $\epsilon>0$, we have 
$$\lim_{n\to\infty}\nu_{\tau_n}(\{l\in \tau_n \mid n^{-\dH-\epsilon}\leq \nu_{\tau_n}(l)\leq n^{-\dH+\epsilon}\})=1$$
in probability.

As a consequence, for all $\delta\in (0,1)$ there exists with high probability a set of leaves $A_{n,\delta}$ of $\tau_n$ such that $\nu_{\tau_n}(A_{n,\delta})\geq 1-\delta$ and $|A_{n,\delta}|\leq n^{\dH+\epsilon}$. Moreover, the maximal measure according to $\nu_{\tau_n}$ of a set of cardinality bounded above by $n^{\dH-\epsilon}$ tends to zero in probability as $n\to\infty$.
\end{thm}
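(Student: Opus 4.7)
My plan is to first prove the concentration
\[
-\log \nu_{\tau_n}(l)/\log n \ \cvP \ \dH \qquad \text{under } \tau_n \text{ uniform and } l \sim \nu_{\tau_n},
\]
and then deduce the three displayed statements by a counting argument. The starting observation is the identity $C(a,b) + C(b,a) = 1$ (easily checked from the explicit formula for $C$), which turns sampling from $\nu_{\tau_n}$ into the hitting measure of a \emph{biased random walk} from the root of $\tau_n$: at each internal vertex with child subtree sizes $(A_L,A_R)$, the walk moves to the left child with probability $C(A_L,A_R)$ and to the right with probability $C(A_R,A_L)$. Along the resulting root-to-leaf path one has
\[
-\log \nu_{\tau_n}(l) \ = \ \sum_{i=1}^{|l|} \bigl(-\log C(a_i,b_i)\bigr),
\]
and the sequence $(S_i)_{i \geq 0}$ of subtree sizes along the path is a Markov chain started at $n$ and absorbed at $1$. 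Standard Catalan asymptotics show that, as $S \to \infty$, the sibling size $B = S - 1 - S'$ of a single step converges in distribution to the heavy-tailed law $p_b = C_b/(2\cdot 4^b) \sim b^{-3/2}/(2\sqrt{\pi})$, which lies in the domain of attraction of a stable subordinator of index $1/2$; consequently $|l| = \Theta(\sqrt n)$, matching the typical height scale of a uniform binary tree.

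The crux is to identify the exponent $\dH$. Because the single-step increments $-\log C(a_i,b_i)$ are of order $S_i^{-2}$ in the frequent small-$b$ regime but of order $1$ in rare near-equal splits, neither a direct LLN nor a CLT applies. Instead I would compute the annealed moments
\[
\E\Bigl[\,\sum_{l \in \tau_n} \nu_{\tau_n}(l)^{1+\alpha}\Bigr] \ = \ \E\Bigl[\sum_{\text{root-leaf paths } \pi \text{ in } \tau_n} \prod_{i=1}^{|\pi|} C(a_i,b_i)^{1+\alpha}\Bigr]
\]
via a spine/many-to-one decomposition: a path of length $h$ in a uniform binary tree of size $n$, together with its sibling subtrees, is in bijection with data $(h,(b_1,\dots,b_h),(T_1,\dots,T_h))$ where each $T_j$ is a uniform plane binary tree of size $b_j$ and $n = h + \sum b_j$. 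The expectation then factorises into Catalan-type generating functions with per-step weight $C(\cdot,b_j)^{1+\alpha}$; solving the resulting self-similar equation (or equivalently locating the dominant singularity of the associated generating function) yields an explicit exponent $\tau(\alpha)$, and $\dH = 3(2-\sqrt 3)$ emerges as the positive root of a quadratic characteristic equation that combines the universal $n^{-3/2}$ Catalan singularity with the approximation $C(a,b) \approx x^2(3-2x)$, $x = a/(a+b)$. Concentration (upgrading from expected values to convergence in probability) then follows from the second-moment estimate at $\alpha = 1$, which controls the number of leaves in the relevant mass window.

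With concentration in hand, the three parts of the statement follow cleanly. The first display is the concentration itself. For the first consequence, set $A_{n,\epsilon} = \{l \in \tau_n : \nu_{\tau_n}(l) \geq n^{-\dH-\epsilon}\}$; since $\sum_l \nu_{\tau_n}(l) = 1$ one has $|A_{n,\epsilon}| \leq n^{\dH+\epsilon}$, and concentration forces $\nu_{\tau_n}(A_{n,\epsilon}) \to 1$ in probability. For the second consequence, any set $A \subset \tau_n$ with $|A| \leq n^{\dH-\epsilon}$ has $\nu_{\tau_n}$-mass bounded above by $n^{\dH-\epsilon}\cdot n^{-\dH+\epsilon/2}$ on the mass window plus $\nu_{\tau_n}$ of the window's complement, both of which vanish. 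The \textbf{main obstacle} is the exact derivation of $\dH = 3(2-\sqrt 3)$: the stable-index-$1/2$ Catalan singularity and the precise Taylor expansion of $C$ must be tracked simultaneously, and the quadratic whose positive root is $\dH$ emerges only after careful bookkeeping of the spine decomposition.
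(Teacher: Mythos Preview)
Your reduction is the same as the paper's: show $-\log M_n/\log n \cvP \dH$ for $M_n=\nu_{\tau_n}(L_n)$ with $L_n\sim\nu_{\tau_n}$, then deduce the three consequences by the counting argument you give (which is correct and essentially what the paper intends). The disagreement is entirely in how concentration is obtained.

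There are two genuine gaps in your proposed route. First, the claim that $\dH$ arises as ``the positive root of a quadratic characteristic equation'' produced by the spine generating function is not substantiated; nothing in the spine computation naturally yields a quadratic. In the paper $\dH$ appears as a \emph{ratio of two explicit integrals}: writing $\cc(x)=x^2(3-2x)$ and $\pp(x)=(x(1-x))^{-3/2}$, one has
\[
\dH \;=\; \frac{\displaystyle\int_0^1 \cc(x)\,(-\log \cc(x))\,\pp(x)\,\dd x}{\displaystyle\int_0^1 \cc(x)\,(-\log x)\,\pp(x)\,\dd x},
\]
which is the slope of $\xi_\nu$ relative to $\xi_\mu$ along the tagged fragment, and evaluates to $3(2-\sqrt 3)$.

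Second, and more seriously, ``concentration follows from the second-moment estimate at $\alpha=1$'' does not work. The moment computation you outline is exactly the multifractal spectrum (Theorem~1.2 of the paper): $\E\!\big[\sum_l \nu_{\tau_n}(l)^{1+\alpha}\big]=n^{-\beta(\alpha)+o(1)}$. Multifractality means precisely that these moments are \emph{not} governed by the typical exponent; e.g.\ $\E[M_n]=n^{-\beta(1)+o(1)}$ with $\beta(1)=\tfrac12(5-\sqrt{13})\approx 0.697$, far from $\dH\approx 0.804$, and $\E[M_n^2]/\E[M_n]^2\to\infty$. So neither a single moment nor a Paley--Zygmund/Chebyshev argument on $M_n$ gives concentration. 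One could extract $\dH=\beta'(0)$ from the moment function via Chernoff bounds optimised over $\alpha$ on both sides of $0$, but that requires the whole curve $\alpha\mapsto\beta(\alpha)$ and its differentiability at $0$, not ``$\alpha=1$''.

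The paper sidesteps this entirely. It Poissonises the Markov chain of subtree sizes along the spine to get a continuous-time process $\mathcal X_\mu^{(n)}(t)=-\log(\text{size}/n)$, shows via a Riemann-sum lemma that its local drift converges to a constant and its local quadratic variation stays bounded, and then applies Doob's inequality to the compensated martingale to get a weak LLN $\theta_n/\log n\cvP 2/\sqrt\pi$. The same argument applied to $\mathcal X_\nu^{(n)}(t)=-\log \nu(\text{subtree})$ gives $\mathcal X_\nu^{(n)}(\theta_n)/\log n\cvP \dH$. This is a direct law-of-large-numbers computation, with the two integrals above as the drifts; no moment spectrum and no generating-function singularity analysis is needed.
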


\begin{figure}[h!]
    \centering
    \includegraphics[width=.7\textwidth]{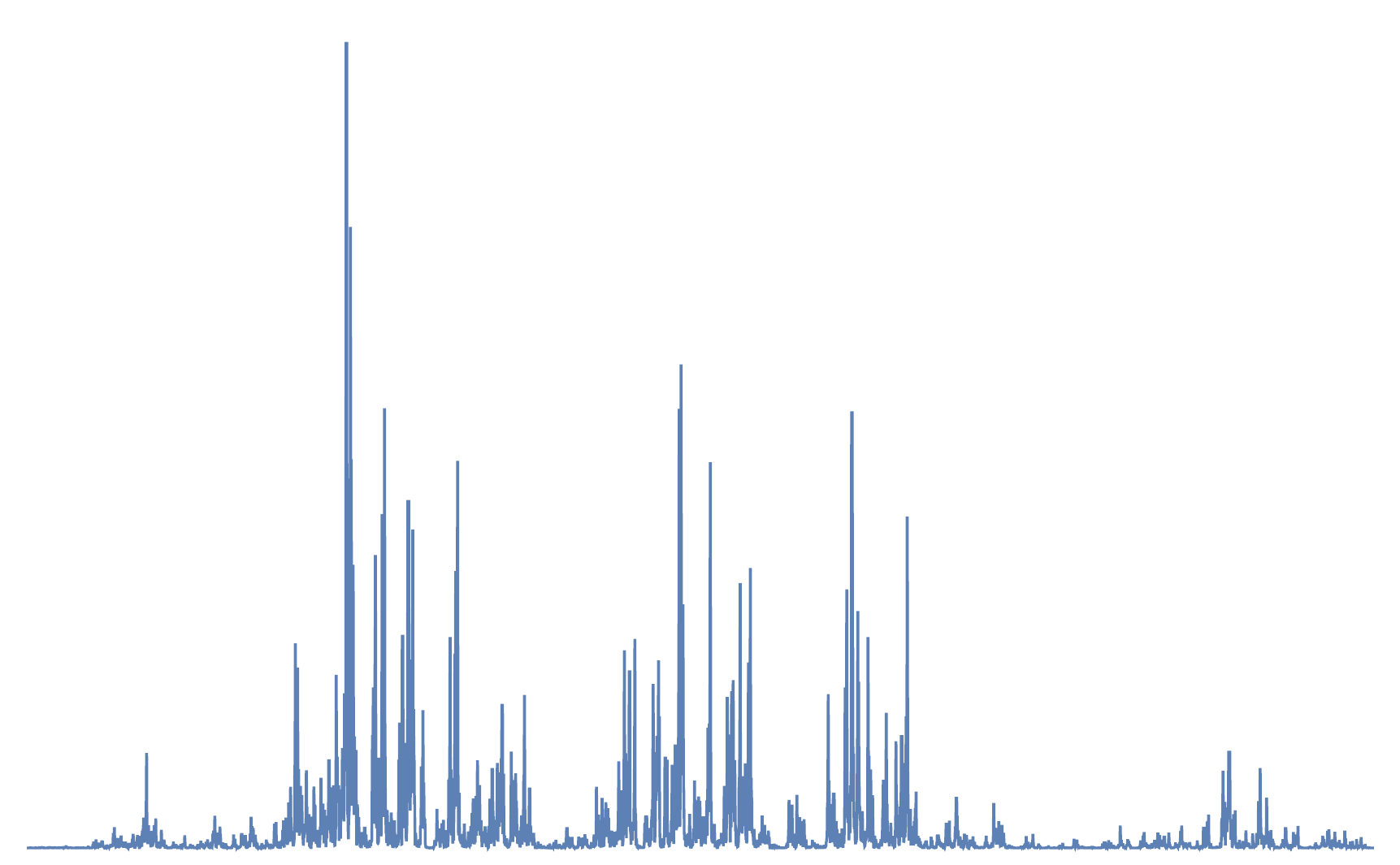}
    \caption{A plot of the density of the discrete leaf-growth measure with respect to the discrete uniform measure on the leaves of the tree depicted in Figure \ref{fig:intro growing tree} (starting from the root and going counterclockwise around the tree). In the limit, those two measures are mutually singular.
    }
    \label{fig:density}
\end{figure}

Refer to \cite[Theorem 1]{CLGharmonic} for a similar phenomenon for the discrete harmonic measure in random trees and \cite{lawler1993discrete} for a discrete analogue of Makarov's theorem. The above result is characteristic of a ``fractal'' behavior of the measure $\nu_{\tau_n}$ and indeed we shall see in Proposition~\ref{prop:continuous Hausdorff} that the continuous analogue of the measure $\nu_{\tau_n}$ is supported by a fractal subset of the Brownian CRT with dimension $ 2 \dH$. We actually go further and compute the full (discrete) multifractal spectrum of the measure $\nu_{\tau_n}$. In plain terms, although Theorem \ref{thm:discretefractal} shows that the typical $\nu_{\tau_n}$-mass of a leaf sampled according to $\nu_{\tau_n}$ is $n^{- \dH + o(1)}$, its higher moments are not ruled by the typical behavior (which is what we call multifractality):

\begin{thm}[Discrete multifractal spectrum]\label{thm:discrete multifractal spectrum} Given $\alpha\in\R$, let $\beta(\alpha)$ be the unique value of $\beta$ for which the integral
\begin{equation}
    I(\alpha,\beta)=\int_0^1 \dd x \frac{1}{\sqrt{x^3(1-x)^3}}\left(\ff(x)^{\alpha+1} x^{-\beta}+\ff(1-x)^{\alpha+1} (1-x)^{-\beta}-1\right)\label{eq:I(alpha,beta)}
\end{equation}
is zero, where $$\ff(x)=x^2(3-2x).$$
As $n \to \infty$, we have
$$   \mathbb{E}\left[\sum_{l \in \tau_n} \nu_{\tau_n}(l) ^{\alpha+1}\right] = n^{-\beta(\alpha) + o(1)}.$$
\end{thm}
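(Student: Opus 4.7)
The plan is to set up a linear recursion for $u_n := \mathbb{E}\left[\sum_{l\in \tau_n}\nu_{\tau_n}(l)^{\alpha+1}\right]$ and analyse it via a sharp asymptotic expansion of its one-step operator. By root decomposition, a uniform $\tau_n$ splits at the root into subtrees of sizes $(K,n-1-K)$ with $\pr(K=k)=\pi_n(k):=\Cat_k\Cat_{n-1-k}/\Cat_n$, and the product formula \eqref{eq:nunavecproduct} yields $\nu_{\tau_n}(l)=C(k,n-1-k)\,\nu_{t_L}(l)$ for any leaf $l$ in the left subtree $t_L$ of size $k$ (symmetric on the right). Summing and taking expectations, using the symmetry $\pi_n(k)=\pi_n(n-1-k)$, one gets
\begin{equation*}
u_n \;=\; 2 \sum_{k=0}^{n-1} \pi_n(k)\, C(k,n-1-k)^{\alpha+1}\, u_k, \qquad u_0=1,
\end{equation*}
and the theorem reduces to establishing $u_n=n^{-\beta(\alpha)+o(1)}$.

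Before attacking the recursion I would verify that $\beta(\alpha)$ is well defined: the admissible range is $\beta<2\alpha+3/2$ (integrand behaves like $3^{\alpha+1}x^{2(\alpha+1)-\beta-3/2}$ near $x=0$, symmetrically at $x=1$), on which $\beta\mapsto I(\alpha,\beta)$ is strictly increasing since $\partial_\beta I$ has a positive integrand. The identity $\ff(x)+\ff(1-x)=1$ (trivially verified and mirroring $C(a,b)+C(b,a)=1$) yields $I(0,0)=0$ and hence $\beta(0)=0$, consistent with $\sum_l \nu=1$; a sign change as $\beta$ approaches the other end of the admissible range gives uniqueness and existence.

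The heart of the proof is a sharp expansion for the operator $(Tf)(n):=2\sum_k \pi_n(k)\,C(k,n-1-k)^{\alpha+1}f(k)$ (so $u_n=(Tu)(n)$) on power functions:
\begin{equation*}
(T[k\mapsto k^{-\beta}])(n)\;=\;n^{-\beta}\left(1+\frac{I(\alpha,\beta)}{4\sqrt{\pi}\sqrt{n}}+o(n^{-1/2})\right),
\end{equation*}
uniformly on compacts in the admissible range. Symmetrising $k\leftrightarrow n-1-k$ and subtracting the trivial $n^{-\beta}=\sum_k\pi_n(k)\,n^{-\beta}$ gives
\begin{equation*}
(T[k^{-\beta}])(n)-n^{-\beta}\;=\;n^{-\beta}\sum_k\pi_n(k)\,J_n(k/n),
\end{equation*}
where $J_n(k/n)=C(k,n-1-k)^{\alpha+1}(k/n)^{-\beta}+C(n-1-k,k)^{\alpha+1}((n-1-k)/n)^{-\beta}-1$. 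In the bulk $k=xn$, $x\in(0,1)$ bounded away from the endpoints, Stirling yields $n\pi_n(xn)\sim \frac{1}{4\sqrt{\pi}\sqrt{n}(x(1-x))^{3/2}}$ and $C(xn,(1-x)n)\to \ff(x)$, producing the integral $\frac{I(\alpha,\beta)}{4\sqrt{\pi}\sqrt{n}}$ by Riemann approximation. At the boundary $k=O(1)$ one has $\pi_n(k)\to \Cat_k/4^{k+1}$, $C(k,n-1-k)\sim \frac{3(k+1)(2k+1)}{2n^2}$ and $C(n-1-k,k)=1-O(1/n^2)$; combined with a Taylor expansion of $((n-1-k)/n)^{-\beta}$ these give $J_n(k/n)=O(n^{-2(\alpha+1)+\beta})+O(1/n)$, whose sum against $\pi_n$ is $o(n^{-1/2})$ precisely because $\beta<2\alpha+3/2$. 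The right boundary is treated symmetrically.

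With the expansion in hand, the conclusion follows from a sub-/super-solution argument. Fix admissible $\beta_-<\beta(\alpha)<\beta_+$; by monotonicity $I(\alpha,\beta_-)<0<I(\alpha,\beta_+)$, so for $n$ large, $T[k^{-\beta_-}](n)\leq n^{-\beta_-}(1-c/\sqrt{n})$ and $T[k^{-\beta_+}](n)\geq n^{-\beta_+}(1+c/\sqrt{n})$ for some $c>0$. Choose constants $c_\pm>0$ so that $c_-\,n^{-\beta_+}\leq u_n\leq c_+\,n^{-\beta_-}$ on a finite initial segment; the linearity and monotonicity of $T$ propagate both inequalities inductively (e.g.\ $u_n=(Tu)(n)\leq c_+\,T[k^{-\beta_-}](n)\leq c_+\,n^{-\beta_-}$, and dually for the lower bound). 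Letting $\beta_\pm\to \beta(\alpha)$ yields $u_n=n^{-\beta(\alpha)+o(1)}$. The main obstacle is the sharp expansion itself: since the leading order $T[n^{-\beta}]\approx n^{-\beta}$ is the same for every $\beta$, the exponent $\beta(\alpha)$ can only be read off from the $n^{-1/2}$ correction, forcing one to perform a unified Euler--Maclaurin / Stirling expansion that reconciles the ``atomic'' small-$k$ tail $\pi_n(k)\sim \Cat_k/4^{k+1}$ (for which the naive mean $\sum_k k\,\pi_n(k)$ diverges) with the bulk density $\frac{1}{4\sqrt{\pi}\,k^{3/2}}$ at intermediate scales $1\ll k\ll n$; these two regimes must be verified to match smoothly near $k\sim \sqrt{n}$ without generating any spurious contribution.
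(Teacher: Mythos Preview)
Your proposal is correct and follows essentially the same route as the paper: the same recursion $e_n=\sum_a P(a,b)\bigl(C(a,b)^{\alpha+1}e_a+C(b,a)^{\alpha+1}e_b\bigr)$, the same $n^{-1/2}$--order expansion with coefficient $\tfrac{1}{4\sqrt{\pi}}I(\alpha,\beta)$, and the same sub/super-solution induction with exponents $\beta\pm\veps$. The only cosmetic differences are that the paper uses $(k+1)^{-\beta\pm\veps}$ as comparison functions (sidestepping the $k=0$ singularity in your $k^{-\beta}$) and proves the key expansion directly by dominated convergence rather than by your boundary/bulk matching.
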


\begin{figure}  
\begin{center}
\includegraphics[width=10cm]{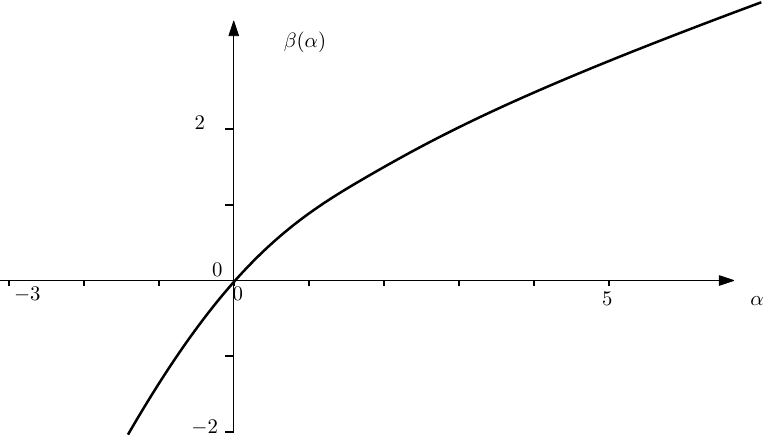}
\caption{Illustration of the function $ \alpha \to \beta( \alpha)$.}\label{fig:plotbeta}
\end{center}
\end{figure}
See Figure~\ref{fig:plotbeta}. In particular for $\alpha=k \in \{0,1,2,...\}$, the above display computes the (annealed) $k$th moment of the mass of a point sampled according to $\nu_{\tau_n}$ which is asymptotic to $n^{-\beta(k)+o(1)}$. In the case of the first moment $\alpha=1$ we have the explicit value $\beta(1) = \frac{1}{2}(5 - \sqrt{13})$. 
The proof of this result is presented in Section~\ref{section:multifractal spectrum}, along with some further comments on the minimal/maximal leaf-growth weight of a single leaf and the multi-fractal spectrum (see Remark~\ref{rem:multi-fractal}).

\paragraph{Continuum limit.}
The investigation of the natural discrete measure $\nu_{\tau_n}$ inevitably leads to the construction of a corresponding limiting measure $\nu_{\CRT}$ on the Brownian Continuum Random Tree (CRT) of David Aldous~\cite{Ald91a, Ald91}. In Section~\ref{section:convergence}, we show the following scaling limit result:

\begin{thm}\label{thm:ghpanddim} We have the following convergence in distribution for the Gromov--Hausdorff--Prokhorov topology (see Section \ref{sec:scalinglimitproof} for a definition):
$$\left( n^{-1/2} \cdot \tau_n, \nu_{\tau_n} \right) \xrightarrow[n\to \infty]{(d)}( 2\sqrt{2}\,\mathcal{T}, \nu_{ \mathcal{T}}), $$ where $ \mathcal{T}$ is the Brownian CRT  and $\nu_{\mathcal{T}}$ is a probability measure on the leaves of $\T$ with Hausdorff dimension $2\gamma= 6( 2- \sqrt{3}) \approx 1.6077$.
\end{thm}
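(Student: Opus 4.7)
The plan is to derive Theorem~\ref{thm:ghpanddim} in three stages: first, enhance the classical Aldous convergence $n^{-1/2}\cdot\tau_n \to 2\sqrt{2}\cdot\mathcal{T}$ from Gromov--Hausdorff to GHP by including the leaf-growth measure; second, explicitly construct the limiting measure $\nu_{\mathcal{T}}$ via a continuum version of the product~\eqref{eq:nunavecproduct}; and third, read off the Hausdorff dimension of its support from Theorem~\ref{thm:discretefractal}.

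The starting point is the elementary expansion
$$C(a,b) \;=\; \ff\!\left(\frac{a}{a+b}\right)\cdot\bigl(1+O(\tfrac{1}{a+b})\bigr) \qquad \text{as } a,b\to\infty,$$
with the same function $\ff(x)=x^2(3-2x)$ as in Theorem~\ref{thm:discrete multifractal spectrum}. Under Aldous's coupling between $\tau_n$ and the Brownian excursion $\exc$, the ancestral line of a leaf indexed by a time $t\in[0,1]$ converges to the geodesic from the root to a point $\xi_t\in\mathcal{T}$, and the normalized subtree sizes $a_i/n$ along this line converge to the mass process $(X_s)_{0\leq s\leq \exc_t}$ whose value at height $s$ is the $\mu_{\mathcal{T}}$-mass of the component of $\mathcal{T}\setminus\{\sigma(s)\}$ that contains $\xi_t$. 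The ratios $a_i/(a_i+b_i)$ correspond to the (countable) multiplicative drops of $X$ at branch points of the spine. This suggests constructing
$$\nu_{\mathcal{T}}(\dd \xi_t) \;=\; D(t)\,\mu_{\mathcal{T}}(\dd \xi_t), \qquad D(t)\;=\;\prod_{s^{*}}\ff\!\left(\frac{X_{s^{*}+}}{X_{s^{*}-}}\right),$$
where the product runs over the branch points of the spine from the root to $\xi_t$, making $\nu_{\mathcal{T}}$ a multiplicative-cascade-type object on the CRT.

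To establish the joint GHP convergence, I would (a) invoke a Skorokhod coupling so that $n^{-1/2}\tau_n \to 2\sqrt{2}\,\mathcal{T}$ almost surely in a common ambient Polish space; (b) observe that tightness of the $\nu_{\tau_n}$'s is automatic as probability measures on precompact sets; and (c) identify the limit by testing against continuous functions $F$. Writing $\int F\,\dd\nu_{\tau_n}$ as the expectation of $F$ at a leaf sampled under $\nu_{\tau_n}$ and expanding via~\eqref{eq:nunavecproduct}, one reinterprets this integral through the excursion-coding and recognizes it, in the limit, as $\int F\,\dd\nu_{\mathcal{T}}$. The main technical difficulty is controlling the small-scale contributions to the product~\eqref{eq:nunavecproduct}: at the many levels $i$ where $a_i$ or $b_i$ is small (say of order $\leq n^{\delta}$), the asymptotic approximation $C\approx\ff$ degrades. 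I would handle this by truncating the product at a threshold $n^{\delta}$ and proving, via an annealed computation using the Catalan-number enumeration of $\tau_n$, that the logarithm of the truncated factors contributes $o(\log n)$ uniformly in $l$ in probability. This is the heart of the argument.

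Finally, for the Hausdorff-dimension statement, the upper bound $\dim_H\mathrm{supp}(\nu_{\mathcal{T}})\leq 2\dH$ follows from Theorem~\ref{thm:discretefractal}: with high probability $\nu_{\tau_n}$ places mass $\geq 1-\delta$ on a set of at most $n^{\dH+\epsilon}$ leaves, each corresponding in $n^{-1/2}\tau_n$ to a ball of radius $O(n^{-1/2})$; a diagonal argument combined with the GHP convergence shows that, up to an arbitrarily small mass set, $\mathrm{supp}(\nu_{\mathcal{T}})$ is covered by $n^{\dH+\epsilon}$ balls of radius $n^{-1/2+o(1)}$, yielding dimension at most $2(\dH+\epsilon)$ and then $2\dH$ after letting $\epsilon\to 0$. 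The matching lower bound comes from the complementary statement in Theorem~\ref{thm:discretefractal}: any efficient covering of $\mathrm{supp}(\nu_{\mathcal{T}})$ by balls of radius $r=n^{-1/2}$ with $s$-dimensional content going to $0$ for some $s<2\dH$ would pull back to a set of fewer than $n^{s/2+o(1)}\leq n^{\dH-\epsilon}$ leaves in $\tau_n$ carrying the bulk of the $\nu_{\tau_n}$-mass, contradicting the theorem. The most delicate point is making these pull-back/push-forward arguments rigorous through the GHP coupling, which requires uniform control, in $n$, of the correspondence between leaves of $\tau_n$ and points of the limiting CRT.
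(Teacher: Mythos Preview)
Your construction of $\nu_{\mathcal{T}}$ as a density against $\mu_{\mathcal{T}}$, namely $\nu_{\mathcal{T}}(\dd\xi_t)=D(t)\,\mu_{\mathcal{T}}(\dd\xi_t)$, is the central gap. The two measures are mutually singular: $\mu_{\mathcal{T}}$ has Hausdorff dimension $2$, while the very statement you are proving asserts $\nu_{\mathcal{T}}$ has dimension $2\dH<2$. Concretely, the infinite product $D(t)=\prod_{s^*}\ff(X_{s^*+}/X_{s^*-})$, taken all the way down to a leaf, degenerates, and no $L^1$ density exists. (Relatedly, the \emph{topological} support of $\nu_{\mathcal{T}}$ is all of $\mathcal{T}$, since every subtree has positive $\nu$-mass; the dimension statement concerns the infimum over full-measure sets, not the support, so your covering argument for the upper bound, as phrased, would be trying to cover a $2$-dimensional set.) The paper instead defines $\nu$ by prescribing $\nu(\mathcal{T}_x)=\prod_{t<\h(x)}\ff(u_x(t))$ for each \emph{non-leaf} $x$, where the product is finite and strictly positive, and then invokes a Carath\'eodory-type extension result to produce a genuine Borel measure concentrated on a $\mu$-null set of leaves.

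Two further issues. For the GHP convergence, your truncation controls the small-scale log-factors only to $o(\log n)$, i.e.\ a multiplicative $n^{o(1)}$; this is far too weak to identify the Prokhorov limit, which requires actual convergence of subtree masses. The paper's argument is sharper and more elementary: after Skorokhod coupling, for a skeleton point $z\in\mathcal{T}$ and corresponding $z^{(n)}\in\tau_n$, it sorts the multiplicative jump ratios along the spine and uses the inequality $C(a,b)\ge a/(a+b+1)$ for $a\ge b$ (resp.\ $\ff(x)\ge x$ for $x\ge 1/2$) to dominate the tail of the $\nu$-product by the tail of the $\mu$-product, which already converges by the GHP convergence with $\mu$. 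For the dimension, your pull-back strategy from Theorem~\ref{thm:discretefractal} is a genuinely different route from the paper's, which instead computes the local dimension $\lim_{r\to 0}\log\nu(B_r(\ell))/\log r=2\dH$ directly from the spine decomposition via a law of large numbers for the coupled subordinators $(\xi_\mu,\xi_\nu)$; your lower bound in particular is problematic, because a ball of radius $r$ in $n^{-1/2}\tau_n$ contains on the order of $nr^2$ leaves, so pulling back an efficient covering does not directly produce a small \emph{leaf} set carrying most of the $\nu_{\tau_n}$-mass.
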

Recall that the dimension of a measure $\nu$ on a metric space $E$ is the infimum of the Hausdorff dimensions of Borel subsets with full $\nu$-measure. 
Our main tool is a description of the law of the spine towards a $\nu_{\mathcal{T}}$-typical point which enables us to compute the Hausdorff dimension of the measure (Proposition \ref{prop:hd}), as well as distributional properties such as the (annealed) distribution of the height of a $\nu_{\mathcal{T}}$-typical point\footnote{This height turns out to have density $8x^3\mathrm{e}^{-2x^2}$ (see Proposition~\ref{prop:hd}): as one might expect, this height stochastically dominates that of a random leaf chosen \emph{uniformly}, since the leaf-growth measure must favour leaves belonging to larger subtrees. Unfortunately, we have no natural explanation for the surprisingly straightforward form of this density.}. Indeed, the continuous version of the measure sheds further light on many properties of its discrete counterparts, and techniques will be borrowed from the analysis of the continuous processes involved to produce the results already mentioned above.

\paragraph{Towards a diffusion on real trees.} The detailed analysis of the leaf-growth measure both in the discrete and continuous setting performed in this work can be seen as a first step in the ambitious programme of understanding the scaling limit of the leaf-growth process itself. Indeed, it is natural to postulate that the rescaled leaf-growth dynamics would converge towards an appropriate continuous Markov process with values in real trees whose invariant measure is the Brownian CRT (such a dynamic should be non-trivial by Proposition \ref{prop:pairedindep}).  Several such dynamics have been the object of intense study in recent years: from the Rémy dynamic, which almost surely converges in the GHP sense \cite{CurienHaas}, to the Aldous move on cladograms converging to the Aldous diffusion \cite{lohr2020aldous,FPRW18,FPRW20,forman2023aldous}; see also the literature about the process of root growth with regrafting \cite{MR2221786}. The  new continuous dynamic obtained by the leaf-growth process is constructed in the work \cite{CFT25} dealing with the much more general self-similar Markov trees, while  the discrete convergence will be addressed in a forthcoming work.

\medskip \noindent \textbf{Acknowledgments:}  The authors would like to thank Nic Freeman, Guillaume Conchon-Kerjan, Adrianus Twigt and Davide Lombardo for helpful discussions. The second author is supported by ``SuperGrandMa", the  ERC Consolidator Grant No 101087572. We are grateful to the two anonymous referees for their careful readings and their useful remarks that helped improve the paper.

\section{Building schemes for trees: leaf-growth measures}\label{section:building schemes}

In this section, we set up some notation and give some context for what we are about to discuss, i.e., how to ``uniformly grow'' trees ``from their leaves''.  For completeness, we shall state some known results in their general form, which involves $d$-ary trees (see~Luczak \& Winkler  \cite{LW04}). In the case $d=2$, motivated by applications to the study of mixing times of flip chains, this procedure has been made completely explicit by Caraceni and Stauffer \cite{CS20} and forms the basis of the current work.

\subsection{Building schemes for $d$-ary trees}

\begin{defn}\label{def:complete d-ary tree}
Let $d\geq 2$ and $n\geq 0$ be integers. We shall call a \emph{$d$-ary tree of size $n$} a rooted plane tree whose vertices all have $d$ children 
(we call such vertices \emph{internal}) or no children (we call such vertices \emph{leaves}).

We say that a $d$-ary tree $\tau$ has \emph{size} $|\tau|=n$ if it has $n$ internal vertices. For simplicity, we shall commit a slight abuse of notation and also write $\tau$ for the set of vertices of $\tau$, so we can write $v\in \tau$ to mean that $v$ is a vertex of $\tau$.

We call $\sT_n^{(d)}$ the set of all $d$-ary trees of size $n$ and write $\sT^{(d)}$ for $\cup_{n\geq 0}\sT_n^{(d)}$. Note that the set $\sT^{(d)}_0$ contains one element, the tree consisting of only its root.
\end{defn}

A $d$-ary tree has a natural recursive structure: if $t\in\sT_n^{(d)}$ with $n\geq 1$, it naturally induces a sequence of $d$-ary trees $(t^1,\ldots, t^d)$: letting $\rho\in t$ be the root, one can erase all children of $\rho$ other than the $i$th child $v_i$ and consider the connected component of $v_i$, rooted at $v_i$, to obtain $t^i$. One then has $\sum_{i=1}^d|t^i|=n-1$. From this decomposition one obtains the \emph{profile} of the tree $t$:

\begin{defn}
Given $t\in \sT_n^{(d)}$ with $n\geq 1$, its \emph{profile} is the vector $\underline{x}(t)=(|t^1|,\ldots,|t^d|)\in \mathbb{N}^d$.	
\end{defn}

Note that the term profile is often used with different meanings (for example, for the sequence of generation sizes) in similar contexts; however, we choose to employ it here to keep notation at least partially consistent with the original paper \cite{LW04}.

We now discuss a particular ``growth'' operation on $d$-ary trees. Given a pair $(t,l)$, where $t$ is a $d$-ary tree of size $n$ and $l$ is a leaf of $t$, we construct a tree $\grow(t,l)$ of size $n+1$ by adding $d$ leaves to $t$ as children of the vertex $l$, thus turning $l$ into an internal vertex. 

Luczak and Winkler consider the following natural question in \cite{LW04}: supposing $\tau_n$ is a uniform random element of $\sT^{(d)}_n$ and $\tau_{n+1}$ is a uniform random element of $\sT^{(d)}_{n+1}$, is it possible to couple $\tau_n$ and $\tau_{n+1}$ in such a way that $\tau_{n+1}$ is always of the form $\grow(\tau_n,l_n)$, where $l_n$ is some leaf of the tree $\tau_n$? 

They call a family of such couplings for all $n\geq 0$ a \emph{building scheme} for $d$-ary trees and answer the question in the affirmative, thus proving

\begin{thm}[Luczak, Winkler, 2004]\label{thm:building scheme existence}For all $d\geq 2$ there exists a building scheme for $d$-ary trees.
\end{thm}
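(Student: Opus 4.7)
The plan is to recast the existence of a building scheme as a transportation problem between uniform measures on consecutive levels, and then apply a Hall-type theorem. A building scheme from size $n$ to size $n+1$ is by definition a coupling of the uniform measures on $\sT_n^{(d)}$ and $\sT_{n+1}^{(d)}$ supported on the set of pairs
\[
E_n \;=\; \big\{(t,\grow(t,l)) : t\in\sT_n^{(d)},\ l \text{ a leaf of } t\big\};
\]
a family of such couplings for every $n$ then glues into the desired Markov chain $(\tau_n)_{n\geq 0}$, so the whole statement reduces to a level-by-level existence question.

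By the Hall--Strassen theorem (equivalently, LP duality or max-flow--min-cut on the bipartite ``growth graph'' with vertex sets $\sT_n^{(d)}$ and $\sT_{n+1}^{(d)}$ and edge set $E_n$), such a coupling exists if and only if the expansion inequality
\[
\frac{|A|}{|\sT_n^{(d)}|}\;\leq\;\frac{|N_{\grow}(A)|}{|\sT_{n+1}^{(d)}|}
\]
holds for every $A\subseteq\sT_n^{(d)}$, where $N_{\grow}(A)=\{\grow(t,l):t\in A,\ l \text{ a leaf of } t\}$. Equivalently one may verify the dual statement: the image of any $B\subseteq\sT_{n+1}^{(d)}$ under cherry-shrinking has cardinality at least $|B|\cdot|\sT_n^{(d)}|/|\sT_{n+1}^{(d)}|$.

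The main obstacle lies precisely in verifying this expansion inequality, which is the combinatorial heart of the theorem. My plan of attack is to exploit the recursive structure of $d$-ary trees: any $t\in\sT_n^{(d)}$ decomposes as a root with $d$ subtrees $(t^1,\ldots,t^d)$ of sizes summing to $n-1$, and growing affects exactly one subtree slot. This invites an induction on $n$ in which couplings assumed at strictly smaller sizes are recursed inside each slot, together with a carefully chosen family of probabilities $q_i(\underline{x}(t))$ governing which slot to grow given the root profile. Consistency of such a choice at the level of profiles reduces to an identity between the Fuss--Catalan numbers $|\sT_n^{(d)}|=\tfrac{1}{(d-1)n+1}\binom{dn}{n}$ at consecutive sizes, which---properly rewritten---must deliver exactly the expansion factor required by Hall's condition. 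The delicate point is that distinct $t_1\neq t_2\in A$ may grow into the \emph{same} element of $\sT_{n+1}^{(d)}$ (shrinking at two different cherries of a single tree produces two distinct pre-images), so $|N_{\grow}(A)|$ falls short of the naive upper bound $|A|\cdot((d-1)n+1)$, and the recursive argument is what controls this overlap.

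Finally, in the binary case $d=2$ that is the focus of this paper, the explicit Caraceni--Stauffer measure $\nu_t$ of equation~\eqref{eq:nunavecproduct} directly realizes such a coupling in closed form, bypassing the abstract Hall--Strassen argument entirely and providing the concrete object whose fractal properties are analyzed in the sequel.
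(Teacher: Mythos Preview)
The paper does not itself prove this theorem; it only cites \cite{LW04} and summarizes their method in one paragraph: the existence of the profile-level transition probabilities $f_\nu(i;\underline{x})\in[0,1]$ satisfying \eqref{eq:d-ary recursion} is cast as a linear feasibility problem, and a max-flow--min-cut argument, combined with explicit Fuss--Catalan computations of the profile probabilities, verifies the required inequalities. Your proposal is in the same spirit, though you take a detour by first setting up the Hall--Strassen condition on the \emph{global} bipartite graph between $\sT_n^{(d)}$ and $\sT_{n+1}^{(d)}$, only to then retreat to the profile-level recursion; the global formulation is correct but superfluous, since constructing the recursive $q_i(\underline{x})$ directly exhibits a feasible flow and renders the global Hall condition moot.

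The genuine gap is that you stop exactly where the work begins. You write that ``consistency of such a choice at the level of profiles reduces to an identity between the Fuss--Catalan numbers \ldots\ which---properly rewritten---must deliver exactly the expansion factor required,'' but this is not an identity: it is a family of \emph{inequalities} (one for each down-set of profiles, guaranteeing that the $q_i$ land in $[0,1]$), and verifying them is the entire substance of the Luczak--Winkler argument. Nothing in your outline indicates how to check them, and the sentence about the overlap between $\grow(t_1,l_1)$ and $\grow(t_2,l_2)$, while a correct observation, does not advance the verification. In short, your framework matches the paper's description of \cite{LW04}, but the combinatorial heart you yourself flag as ``the main obstacle'' is left entirely untouched.
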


Note that one can present a building scheme as a family of probability measures $\nu_t$, one for each $d$-ary tree $t$. The probability measure $\nu_t$ is defined on the set of leaves of $t$ and has the following property. If $(\tau_n,\ell)$ is a random variable such that $\tau_n$ is uniform in $\sT_n^{(d)}$ and, conditionally on $\tau_n=t$, $\ell$ is a leaf of $t$ distributed according to $\nu_t$, then $\grow(\tau_n, \ell)$ is uniform in $\sT^{(d)}_{n+1}$. 

Moreover, what Luczak and Winkler obtain is that there is a \emph{recursive} building scheme for $d$-ary trees, which we can think of as a family $(\nu_t)_{t\in \sT^{(d)}}$ such that for all $(t,l)$ with $t\in \sT^{(d)}$ of size $|t|\geq 1$, $l$ leaf of $t$, we have
$$\nu_t(l)=\sum_{i=1}^d \mathbf{1}_{l\in t^i}f_\nu(i;\underline{x}(t))\nu_{t^i}(l),$$
where $\underline{x}(t)$ is the profile of $t$ and we have omitted curly brackets from the arguments of the measure $\nu_t(\cdot)$ (as we shall do consistently from now on). The function $f_\nu(\cdot ;\cdot ):[d]\times \mathbb{N}^d\to [0,1]$, when computed for a certain $i\in[d]$ and the profile $\underline{x}(t)$ of a tree $t$, yields the probability $\nu_t(l\in t^i)$, which only depends on $\underline{x}(t)$ and not on the actual shape of the trees $(t^1,\ldots,t^d)$.

A recursive building scheme is completely determined by the functions $f_\nu(\cdot ; \cdot)$, since
$$\nu_t(l)=\prod_{v\prec l}f_\nu(i_v;\underline{x}^v),$$
where the product is taken over all internal ancestors $v$ of $l$, $v$ being the $i_v$th child of its parent and $\underline{x}^v$ being the profile of the tree of descendants of $v$, which serves as the root. Moreover, the functions $f_\nu$ must satisfy the obvious equation
$$\sum_{i=1}^df_\nu(i;\underline{x})=1,$$
as well as a recurrence ensuring they are a building scheme. Letting $\tau_n$ and $\tau_{n+1}$ be uniform in $\sT_n^{(d)}, \sT_{n+1}^{(d)}$ and letting $e_i$ be the $i$th canonical basis vector in $\mathbb{N}^d$, we must have
\begin{equation}\label{eq:d-ary recursion}\Pr((|\tau_{n+1}^i|)_{i=1}^d=\underline{x})=\sum_{i=1}^d\mathbf{1}_{\underline{x}_i\geq 1}\Pr((|\tau_{n}^i|)_{i=1}^d=\underline{x}-e_i)f_\nu(i;\underline{x}-e_i),\end{equation}
where $\underline{x}_i$ is the $i$th entry of the vector $\underline{x}$, which simply amounts to requiring that the building scheme act correctly on the probabilities of the profiles.

The proof of Theorem~\ref{thm:building scheme existence} given in \cite{LW04} leverages the fact that the probability that $\tau_n$ exhibits a certain profile can be computed explicitly in order to prove inequalities that are sufficient for the existence of a functions $f_\nu$ satisfying the above conditions and also taking values in $[0,1]$. This is thanks to a max-flow-min-cut type of argument, which does not suggest explicit expressions for possible solutions $f_\nu$, in spite of the fact that the coefficients of the recursions have fairly simple expressions.

It is at this point that we restrict ourselves to the simpler case $d=2$: we will now introduce some more compact notation and describe an explicit building scheme as done in \cite{CS20}.

\subsection{The leaf-growth measure for binary trees}

We shall henceforth only deal with what we will simply call \emph{binary trees}, that is, trees in $\sT^{(2)}$. We will drop $d$ from all notations, writing $\sT$ for $\sT^{(2)}$ and $\sT_n$ for $\sT_n^{(2)}$. Given $t\in \sT$ such that $|t|\geq 1$, we shall write $t^L$ and $t^R$ for the trees $t^1$ and $t^2$, identifying them as \emph{the left subtree} and \emph{the right subtree}. Moreover, we have the well-known result that binary trees are counted by Catalan numbers, i.e., that
\[|\sT_n|=\Cat(n)=\frac{1}{n+1}\binom{2n}{n}.\]
This implies that, given a profile $(a,b)\in\mathbb{N}^2$ and a uniform tree $\tau_{a+b+1}$ from $\sT_{a+b+1}$, we have
\[\Pr(\underline{x}(\tau_{a+b+1})=(a,b))=\frac{\Cat(a)\Cat(b)}{\Cat(a+b+1)}.\]
We shall need a compact notation for this expression, which from now on we will simply write as $P(a,b)$.

Now suppose we were to look for a recursive building scheme $f_\nu$ for binary trees, with the additional natural property of being \emph{symmetric}, that is, that $f_\nu(1;(a,b))=f_\nu(2; (b,a))$. In this case, $f_\nu$ can be computed explicitly and takes a very simple form. Indeed, as shown within the proof of \cite[Proposition 5.1]{CS20}, we have

\begin{thm}\label{thm:defmeasurediscrete}The function
\begin{equation}\label{eq:C(a,b)}C(a,b)=\frac{(a+1)(2a+1)(a+3b+3)}{(a+b+1)(a+b+2)(2(a+b)+3)},\end{equation}
defined on pairs $(a,b)$ of non-negative integers, is the only solution to the equations:
\[C(a,b)+C(b,a)=1;\]
\[P(a+1,b+1)=P(a,b+1)C(a,b+1)+P(a+1,b)(1-C(a+1,b))\]
for all integers $a,b\geq -1$, where we set $P(a,-1)=P(-1,b)=0$ (and therefore $C(a,b)$ appears in the equation only if $a,b\geq 0$).

Therefore, the only symmetric recursive building scheme for binary trees is the family of measures $(\nu_t)_{t\in \sT}$, where $\nu_t$ is the probability measure on the leaves of $t$ satisfying the recursion
\begin{equation}\nu_t(l)=1_{l\in t^L}C(|t^L|,|t^R|)\nu_{t^L}(l)+1_{l\in t^R}(1-C(|t^L|,|t^R|))\nu_{t^R}(l).\label{recursion with C's}\end{equation}
\end{thm}

From now on, we shall write $(\nu_t)_{t\in \sT}$ for the unique symmetric recursive building scheme for binary trees; we will call $\nu_t$ the \emph{leaf-growth measure} on $t$. Notice that the equations in the statement above are precisely the requirement \eqref{eq:d-ary recursion} in order for $\nu_t$ to be a building scheme, with the added requirement of symmetry.

Remark that the uniqueness of the solution is clear, since one can inductively compute $C(a,b)$, for $a>b$, by using equations of the third type to reduce the value of $b$ until it becomes $0$ (while obviously $C(a,a)=\frac12$). The fact that the expression \eqref{eq:C(a,b)} is a solution can be easily checked by induction.

% In practice, the definition of $\nu_t$ given in Theorem \ref{thm:defmeasurediscrete} can be used as mentioned in the introduction: if $l$ is any leaf of $t$, then 

\begin{rem}[A sports question] \label{rem:best of three}
The expression \eqref{eq:C(a,b)} for $C(a,b)$ has an interesting combinatorial interpretation. Imagine a match played between $L$ (``left'') and $R$ (``right''), with $L$ initially having $2a+1$ tokens and $R$ having $2b+1$. Each player has probability of winning the match proportional to their number of tokens: for the first match, $L$ wins with probability $\frac{2a+1}{2a+2b+2}$ and $R$ wins with probability $\frac{2b+1}{2a+2b+2}$. After a match is played, the winner gains one token. Any subsequent matches have updated outcome probabilities, but are played independently.

The quantity $C(a,b)$ is precisely the probability that $L$ wins a majority of three successive matches, that is,
$$C(a,b)=3\frac{(2a+1)(2a+2)(2b+1)}{(2a+2b+2)(2a+2b+3)(2a+2b+4)}+\frac{(2a+1)(2a+2)(2a+3)}{(2a+2b+2)(2a+2b+3)(2a+2b+4)}.$$

We do not have a direct combinatorial proof of the link between the tree growth measure and this presentation in terms of best-of-three winning probabilities, but this interpretation can be used to give alternative descriptions of the recursive symmetric building scheme, such as the one that follows.
\end{rem}

\begin{cor}
Given $n\geq 1$ and $t\in\sT_n$, place $n+1$ tokens numbered $1$ to $n+1$ on the leaves of $t$. Now play the following game:
\begin{itemize}
\item at each step, pick a vertex $v$ with tokens such that its descendants have no tokens (initially, one can pick any leaf of $t$); move all the tokens of $v$ to the parent of $v$;
\item if a vertex $w$ contains $n_a>0$ tokens numbered $a$ and $n_b>0$ tokens numbered $b\neq a$, play a best-of-three game on the vertex according to the rules described in Remark \ref{rem:best of three} with one player having $n_a$ starting tokens and the other having $n_b$. After all three matches are played, three tokens have been added and the winner has been determined. Relabel all tokens at $w$ with $a$ or $b$, according to which player was the winner, and destroy two tokens (thus leaving $n_a+n_b+1$ on $w$).
\item unless all tokens have the same label, perform another step.
\end{itemize}

The leaf corresponding to the final surviving label is distributed according to $\nu_t$.
\end{cor}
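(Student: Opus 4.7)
The plan is to recognise the game as a step-by-step execution of the recursion for $\nu_t$ from Theorem~\ref{thm:defmeasurediscrete}. First I would note that the movement rule forces each internal vertex $w$ to be reached only after its entire subtree has been processed: tokens can be pushed out of $v$ only when the strict descendants of $v$ carry none, so the tokens arriving at $w$ from its left (resp. right) child do so only once that child's subtree has been fully merged into a single label. A short induction on $|t_w|$ (the size of the subtree rooted at $w$) then shows that, upon completion of processing of the subtree rooted at $w$, the vertex $w$ carries exactly $2|t_w|+1$ tokens, all bearing the same label: the leaf case has one token, and the inductive step follows from the fact that each best-of-three converts $n_a+n_b$ tokens into $n_a+n_b+1$.

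Consequently, the best-of-three match played at an internal vertex $w$ with children subtree sizes $a=|t_w^L|$ and $b=|t_w^R|$ is played with starting counts $n_a=2a+1$ and $n_b=2b+1$. By Remark~\ref{rem:best of three}, the probability that the label coming from $t_w^L$ wins this match is exactly $C(a,b)$. Since the three coin flips underlying distinct best-of-three games are independent, the outcomes at different internal vertices are mutually independent given the labels delivered by their children; in particular the distribution of the final surviving label does not depend on the (non-unique) order in which admissible moves are chosen, since one may attach to each internal vertex its own independent triple of coin flips in advance.

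I would then conclude by strong induction on $|t|$. The case $|t|=0$ is vacuous. For $|t|\ge 1$, the sub-games played on $t^L$ and $t^R$ are independent of one another and of the best-of-three at the root, so by the inductive hypothesis their surviving labels are distributed as $\nu_{t^L}$ and $\nu_{t^R}$ respectively. Conditioning on the root best-of-three then yields, for any leaf $\ell$ of $t$,
\[
\pr(\text{final surviving label equals that of }\ell)=\mathbf{1}_{\ell\in t^L}\,C(|t^L|,|t^R|)\,\nu_{t^L}(\ell)+\mathbf{1}_{\ell\in t^R}\bigl(1-C(|t^L|,|t^R|)\bigr)\nu_{t^R}(\ell),
\]
which is precisely the recursion characterising $\nu_t$. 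The only point requiring care is the token-count bookkeeping in the first paragraph; once that is in place everything else follows immediately from Remark~\ref{rem:best of three} and the defining recursion of $\nu_t$.
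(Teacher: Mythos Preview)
Your argument is correct and is exactly the approach the paper has in mind; in fact the paper states this corollary without proof, treating it as immediate from Remark~\ref{rem:best of three} and Theorem~\ref{thm:defmeasurediscrete}, and your write-up supplies precisely the bookkeeping (the token-count induction yielding $2|t_w|+1$ tokens at each processed vertex, and the observation that the order of admissible moves is irrelevant) that makes the deduction rigorous.
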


\begin{proof}We say that a pile of tokens on a vertex $v$ is \emph{movable} if they all have the same label and there are no tokens on any strict descendants of $v$. Letting $t^v$ be the tree of descendants of $v$, if the tokens of $v$ are movable, then they are $2|t^v|+1$. Single tokens on leaves are movable, and they are indeed $2\cdot 0+1$. Inductively, when a new movable pile is made on an internal vertex $v$, one can say that it must have been created by merging a movable pile from its left child $v_l$ and one from its right child $v_r$, which must be of sizes $2|t^{v_l}|+1$ and $2|t^{v_r}|+1$, respectively. Thus a best-of-three game is played in $v$, yielding a movable pile of $2|t^{v_l}|+2|t^{v_r}|+2+3-2=2|t^v|+1$ tokens. The probability that the label of the tokens is that of a leaf of $t^{v_l}$ is $C(|t^{v_l}|,|t^{v_r}|)$.
The procedure as described plays a best-of-three game in each internal vertex of $t$. Letting $t^L, t^R$ be the left and right subtrees, the probability that the final label is $l\in[n+1]$ is indeed (inductively) given by~\eqref{recursion with C's}.
\end{proof}

\subsection{Some useful asymptotics}
%What we wish to do in this paper is investigate some properties of the measures $\nu_t$. In particular, we shall obtain a limiting leaf-growth measure $\nu$ on (the set of leaves of) the Brownian CRT from the sequence $(\tau_n,\ell)$, where $\tau_n$ is uniform in $\sT_n$ and $\ell$, conditionally on $\tau_n$, is distributed according to $\nu_{\tau_n}$. 
Before we move on to investigate several aspects of the leaf-growth measures $\nu_t$, it is convenient to state some of the main (easy or well known) approximation results we will need in what follows, both for the quantities $C(a,b)$ and the probabilities $P(a,b)$. \medskip

Note that, fixing $b=n-1-a$, we can write out the values of $P(a,b)$ as a quotient of factorials
\[P(a,b)=\frac{(2a)!(2b)!n!(n+1)!}{a!(a+1)!b!(b+1)!(2n)!}.\]
Using Stirling's approximation, it is clear that, for $x\in(0,1),$
\begin{equation}\label{Stirlinglimit}
4\sqrt{\pi}\lim_{\substack{n\to\infty \\ a/n \to x}}n^{3/2} P(a,b)=\frac{1}{x^{3/2}(1-x)^{3/2}}=:\pp(x);
\end{equation}
moreover, for all $x\in [0,1]$,
\begin{equation}\label{eq:cc}
 \lim_{\substack{n\to\infty \\ a/n \to x}}C(a,b)=x^2(3-2x)=:\cc(x).\end{equation}

The more compact notation $\pp(x)$ and $\cc(x)$ for the functions $x^{-3/2}(1-x)^{-3/2}$ and $x^2(3-2x)$ (see Figure~\ref{fig:graph of c}) that is being introduced here will be used frequently in future sections.
 \begin{figure}[h!]
\begin{center}
    \includegraphics[width=8cm]{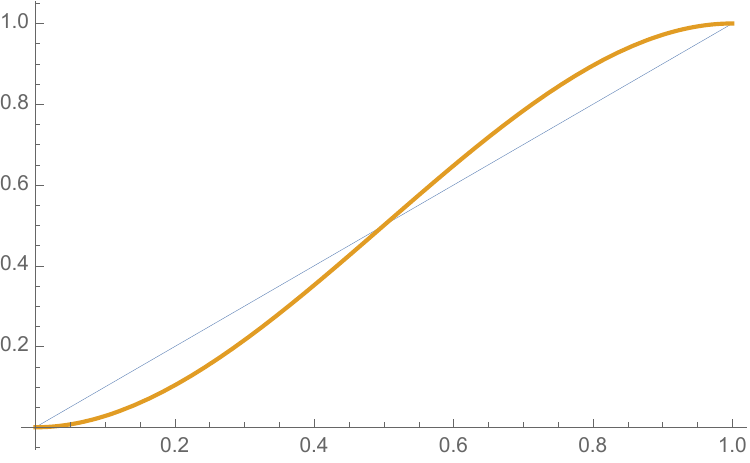}
    \caption{\label{fig:graph of c}A plot of the function $ x \mapsto \ff(x)$. Notice that it is symmetric with respect to $(1/2,1/2)$ and below the first bisector when $x \in [0,1/2]$.}
    \end{center}
\end{figure}

\vspace{0.5cm}

Over the course of the paper we will encounter several Riemann sums (of improper integrals) of the form
$
\sum_{a=1}^{n-1} f(\frac{a}{n})P(a,n-1-a) C(a,n-1-a)^k
$ for some integer $k \in \{0,1,2,... \}$. Under some mild conditions on $f$ and $k$, they exhibit the expected behaviour:

\begin{prop}\label{prop:generalintegrallimit}

Let $f$ be a continuous function on $(0,1),$ and $k\in \{0,1,2, ... \}.$ Assume that $|f(x)|\leq  K(x \wedge (1-x))^{1-2k}$ for some $K>0,$ and if $k>0$, assume further that $\sup\{f(\frac{a}{n}),a\in\{1,\ldots,n-1\}\}=o(\sqrt{n}).$ We then have

\begin{equation}\label{eq:limsumwithC}
    \underset{n\to\infty}\lim\sqrt{n} \sum_{a=1}^{n-1} f{\left(\frac{a}{n}\right)}P(a,n-1-a)\,C(a,n-a-1)^k = \frac{1}{4\sqrt{\pi}}\int_0^1 f(x)\ff(x)^k \pp(x) \dd x.
\end{equation}

\end{prop}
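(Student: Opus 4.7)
The strategy is to read the LHS as a Riemann sum of mesh $1/n$ and to control its endpoint behavior using quantitative Stirling bounds. Rewriting
\[
\sqrt{n}\sum_{a=1}^{n-1} f\!\left(\tfrac{a}{n}\right)P(a,n-1-a)\,C(a,n-1-a)^{k} = \frac{1}{n}\sum_{a=1}^{n-1} f\!\left(\tfrac{a}{n}\right)\bigl(n^{3/2}P(a,n-1-a)\bigr)\, C(a,n-1-a)^{k},
\]
one sees that $n^{3/2}P(a,n-1-a)\to \tfrac{1}{4\sqrt\pi}\pp(x)$ and $C(a,n-1-a)\to \ff(x)$ whenever $a/n\to x\in(0,1)$, by \eqref{Stirlinglimit} and \eqref{eq:cc}. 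Both convergences are uniform on each compact subset of $(0,1)$ (immediate from Stirling for the first, from the rational expression \eqref{eq:C(a,b)} for the second). A standard Riemann-sum argument therefore gives, for every fixed $\eta\in(0,\tfrac12)$, the convergence of the truncated sum over $\eta n\le a\le (1-\eta)n$ to $\tfrac{1}{4\sqrt\pi}\int_{\eta}^{1-\eta} f(x)\,\ff(x)^{k}\,\pp(x)\,\dd x$.

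The substantive step is to bound the two tail contributions $a/n<\eta$ and $a/n>1-\eta$ uniformly in $n$. For this I would record two explicit one-sided estimates obtained from the same computations underlying \eqref{Stirlinglimit} and \eqref{eq:cc}. Keeping the Stirling bounds quantitative, one obtains an absolute constant $K_0$ such that
\[
P(a,n-1-a)\leq \frac{K_{0}}{n^{3/2}}\,\pp\!\left(\tfrac{a}{n}\right)\qquad\text{for all }1\leq a\leq n-1,
\]
and a direct inspection of the formula \eqref{eq:C(a,b)} yields $C(a,n-1-a)\leq K_{1}\,\ff(a/n)$ uniformly for $a\leq (n-1)/2$, the complementary range being handled either via the identity $\ff(x)+\ff(1-x)=1$ and $C(a,b)+C(b,a)=1$ or by the trivial $C\leq 1$. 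Plugged into the tail, each summand is then dominated by $K'\,|f(a/n)|\,\pp(a/n)\,\ff(a/n)^{k}/n^{3/2}$, so that after the $\sqrt n$ scaling the tail sum is itself a Riemann sum for $|f|\,\pp\,\ff^{k}$ over $(0,\eta)\cup(1-\eta,1)$. The hypothesis on $|f|$, combined with the suppression $\ff(x)^{k}\sim(3x^{2})^{k}$ near $0$ which absorbs the permitted growth of $|f|$ there, renders this integral convergent and vanishingly small as $\eta\to 0$; the side assumption $\sup_{a}|f(a/n)|=o(\sqrt n)$ is used to discard the two extreme terms $a\in\{1,n-1\}$, where the Stirling approximation is sharpest only up to a constant and the pointwise bound on $f$ cannot be invoked directly.

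Assembling these pieces via the usual two-scale limit --- fix $\eta$ small enough to make the tails $O(\veps)$ uniformly in $n$, then let $n\to\infty$ so the central Riemann sum converges, and finally let $\eta\to 0$ to recover the full integral on the right-hand side of \eqref{eq:limsumwithC} --- completes the proof. I expect the only real technical point to be the careful pairing of the uniform upper bound on $P$ with the $\ff^{k}$ decay near $0$ (and the symmetric behaviour near $1$) under the stated hypothesis on $f$; once this dominating bound is written down, the remainder is routine Riemann-sum bookkeeping.
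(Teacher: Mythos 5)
Your proof is correct in outline and, for $k=0$, is essentially the argument the paper gives: read the sum as a mesh-$\tfrac1n$ Riemann sum, use the pointwise limits \eqref{Stirlinglimit} and \eqref{eq:cc}, and dominate by the uniform Stirling-type bound $n^{3/2}P(a,n-1-a)\le K'\pp(a/n)$ (the paper runs this through dominated convergence on $\int_0^{1/2}g_n$ instead of your two-scale $\eta$-truncation, but that is a cosmetic difference). Where you genuinely diverge from the paper is the treatment of $k>0$. The paper reduces to the $k=0$ case by writing $C^k=\ff^k+O(k/n)$ and then bounding the error $\tfrac{k}{\sqrt n}\sum|f(a/n)|P(a,n-1-a)\le\tfrac{k}{\sqrt n}\sup|f|$, which is exactly where the extra hypothesis $\sup_a|f(a/n)|=o(\sqrt n)$ is consumed. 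You instead dominate $C(a,n-1-a)^k$ by $(3\ff(a/n))^k$ directly from \eqref{eq:C(a,b)} and push the resulting bound $K|f(a/n)|\pp(a/n)\,\ff(a/n)^k\lesssim x^{-1/2}$ through the same DCT/Riemann machinery as in the $k=0$ case. This buys something real: the $o(\sqrt n)$ assumption becomes unnecessary under your approach, since $|f(1/n)|\le K n^{2k-1}$, $P(1,n-2)=O(1)$ and $C(1,n-2)^k=O(n^{-2k})$ combine to make the $a=1$ summand $O(n^{-1})$, hence $O(n^{-1/2})$ after the $\sqrt n$ scaling. That is the one place your write-up goes astray: you claim the $o(\sqrt n)$ hypothesis is needed "to discard the two extreme terms $a\in\{1,n-1\}$" because "the pointwise bound on $f$ cannot be invoked directly" there. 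That is not so --- the pointwise bound $|f(x)|\le K(x\wedge(1-x))^{1-2k}$ is perfectly usable at $x=1/n$, and it already kills those terms, as the computation above shows. The hypothesis enters the paper's proof only because the $C^k\to\ff^k$ replacement throws away the $\ff^k$ decay that is precisely what tames the blow-up of $f$ near the boundary; your domination keeps that decay, and so dodges the issue entirely. The argument as you have sketched it is correct; the confusion is only in the stated motivation for the side hypothesis.
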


Note that, while the $k=0$ case is essentially a variant of \cite[Proposition 39]{HM12}, we prefer giving a self-contained proof here.
\begin{proof} The following uniform bounds on $C$ and $P$ are easily checked: there exists a constant $K'>0$ such that
\begin{equation}\label{Stirlingbound}
n^{3/2} P(a,b) \leq \frac{K'}{(\frac{a}{n})^{3/2}(\frac{b}{n})^{3/2}},
\end{equation}
and  
\begin{equation}\label{eq:boundc1}
\left|C(a,b)-\cc{\left(\frac{a}{n}\right)}\right|\leq\frac1n,
\end{equation}
where $n=a+b+1$, and \eqref{Stirlingbound} is immediately obtained by Stirling's approximation.

We start with the $k=0$ case. Specifically, we are going to show
\[
    \sqrt{n} \sum_{a=2}^{\lfloor n/2\rfloor} f{\left(\frac{a}{n}\right)}P(a,n-1-a) \longrightarrow \frac{1}{4\sqrt{\pi}}\int_0^{1/2} f(x) \pp(x) \dd x.
\]
This is enough to conclude by first exploiting the natural symmetry around $1/2$, then observing that $P(\lfloor n/2\rfloor,\lfloor n/2 \rfloor )=O(n^{-3/2})$ so overcounting middle terms is not an issue, and finally noting that $f(1/n)P(1,n-2)=O(1/n),$ so removing the first term is also not an issue.

Write $n^{1/2}\sum_{a=2}^{\lfloor n/2\rfloor} f(\frac{a}{n})P(a,n-1-a)$ as $\int_{0}^{1/2} g_n(x)\mathrm{d}x$, where $g_n$ is the function defined by $g_n(x)=n^{3/2}f(a/n)P(a,n-1-a)$ for $x\in \left[\frac{a-1}{n},\frac{a}{n}\right)$ and $a\in \{2,\ldots,\lfloor n/2\rfloor\},$ and $g_n(x)=0$ otherwise. It is readily checked that $g_n(x)$ converges pointwise to $\frac1{4\sqrt{\pi}}f(x) \pp(x),$ and we just need to check that we can apply the dominated convergence theorem. To do this, notice that, if $x\in \left[\frac{a-1}{n},\frac{a}{n}\right)$ for $2\leq a \leq \lfloor n/2\rfloor$, then $x+1/n\leq 2x,$ while $n-1-a\geq n/3$. Hence, by \eqref{Stirlingbound}, we have
\begin{align*}
    g_n(x)& =n^{3/2}f(a/n)P(a,n-1-a) \\
    &\leq KK'\frac{a}{n} \left(\frac{a}{n}\right)^{-3/2}\left(\frac{n-1-a}{n}\right)^{-3/2} \\
    &\leq 3^{-3/2}KK'   \left(x+\frac{1}{n}\right)x^{-3/2} \\
    &\leq 2\,3^{-3/2}KK'x^{-1/2},
\end{align*}
which ends the proof for $k=0$.

\vspace{1cm}

Next we treat the $k>0$ case. Notice first that, since $|C(a,n-1-a)-\ff(\frac{a}{n})|\leq \frac{1}{n}$ and both terms are less than one, we have $|C(a,n-1-a)^k-\ff(\frac{a}{n})^k|\leq \frac{k}{n}$. Hence we can write
\begin{align*}
   &\sqrt{n} \sum_{a=1}^{n-1} f{\left(\frac{a}{n}\right)}P(a,n-1-a)\,C(a,n-a-1)^k \\
   =&\sqrt{n}\sum_{a=1}^{n-1} f{\left(\frac{a}{n}\right)}P(a,n-1-a)\,\ff{\left(\frac{a}{n}\right)}^k +O\left( \frac1{\sqrt{n}}\sum_{a=1}^n f{\left(\frac{a}{n}\right)}P(a,n-1-a)\right) \\
   =&\sqrt{n}\sum_{a=1}^{n-1} f{\left(\frac{a}{n}\right)}P(a,n-1-a)\,\ff{\left(\frac{a}{n}\right)}^k+ O\left(\frac1{\sqrt{n}}\sup\left\{f{\left(\frac{a}{n}\right)},a\in\{1,\ldots,n-1\}\right\}\sum_{a=1}^{n-1}P(n,n-1-a)\right)
 \end{align*}
By assumption, the second term tends to $0$. We treat the first term by applying the $k=0$ case to the function $f(\cdot)\ff(\cdot)^k$, which does satisfy the desired condition that $|f(x)\ff(x)^k| \leq K(x\wedge (1-x))$ for some $K>0$, because $|f(x)|\leq \frac13K(x\wedge(1-x))^{1-2k}$ for well-chosen $K$ and $\ff(x) \leq 3 (x\wedge (1-x))^2$.
\end{proof}

\section{The leaf-growth measure on the Brownian CRT}\label{section:convergence}

In this section we construct the leaf-growth measure $\nu_\mathcal{T}$ on the Brownian CRT and prove that (a scalar multiple of) the latter, equipped with the former, is the scaling limit of the discrete model for the Gromov--Hausdorff--Prokhorov metric. In order to establish the scaling limit, we provide an explicit description of the spine towards a typical $\nu_\mathcal{T}$-point.

\subsection{Preliminary notation}

The Brownian Continuum Random Tree (CRT) is the random real tree $( \mathcal{T}, d)$ coded by a standard Brownian excursion. We refer to \cite{LG2005} for its main properties and for standard notation about real trees, some of which is recalled below. It is given with a distinguished point $ \rho$ called the  \emph{root} of the tree. The degree of a point $x \in \mathcal{T}$ is the number of connected components of $\T\backslash \{x\}$. When the degree is equal to $1$ we say $x$ is a \emph{leaf}; we call $x$ a \emph{point of the skeleton} when its degree is $2$ and a \emph{branchpoint} when its degree is $3$ (there are almost surely no points with degree $4$ or more in the Brownian CRT). For $x\in \T,$ we call \emph{height} of $x$ its distance to $\rho$ and denote it by $\h(x)$. We say that $x$ is a \emph{descendant} of another point $y\in\T$ (and equivalently that $y$ is an \emph{ancestor} of $x$) if $y$ lies on the segment between $\rho$ and $x$, and we call $\T_x$ the subset of $\T$ formed by $x$ and all its descendants. Finally, for $x \in \T$ and $t\geq 0$ we define for the point $[x]_t$  as:
\begin{itemize}
    \item the unique ancestor of $x$ with height $t$, if $t< \h(x)$
    \item $x$ itself, if $t\geq \h(x).$
\end{itemize}
In particular, $ \T_{[x]_t}$ denotes the subtree above height $t$ which contains the point $x$. When $[x]_t$ is a branchpoint and if $t < \h(x)$, then $\T_{[x]_t} \backslash \{[x]_t\}$ has two components and we denote by $ \mathcal{T}_{[x]_{t+}}$ the closure of component that contains~$x$.

The Brownian CRT is also endowed with its uniform mass measure $\mu$ (also called the leaf measure), which can either be seen as the projection of the Lebesgue measure on $[0,1]$ in the coding by the Brownian excursion, or more intrinsically as its natural Hausdorff measure, see \cite{duquesne2005probabilistic,duquesne2006hausdorff}. 

In our work, another description of $\mathcal{T}$ will be of key importance. Namely, the Brownian CRT can (with this normalisation) be seen as a fragmentation tree in the sense of Haas and Miermont \cite{HM04} with self-similarity index $\alpha = - \frac{1}{2}$ for the dislocation measure 
\begin{eqnarray}
    \label{eq:Dislocation}
 \mathsf{Dis} := \sqrt{\frac{2}{\pi}} \int_{1/2}^{1} \frac{ \mathrm{d}x}{(x(1-x))^{3/2}} \delta_{(x,(1-x))}.
 \end{eqnarray}
Heuristically, this means that it encodes the genealogy of a system of particles starting from a single particle of mass $1$, where particles of mass $m$ evolve independently and split into two particles of mass $mx$ and $m(1-x)$ at rate $m^{-1/2} \cdot \sqrt{\frac{2}{\pi}} \pp(x)$, where $\pp(x)$ is as in \eqref{Stirlinglimit}. Following Bertoin \cite{bertoin2002self}, those particles alive at time $t$ correspond to the subtrees of $\T$ above height $t$ and their masses correspond to the $\mu$-masses of those subtrees.

\subsection{Definition of $\nu$}
Conditionally on the CRT~$\mathcal{T}$, the leaf-growth measure $\nu_{\mathcal{T}}$, which throughout this section we will also denote by $\nu$, rendering the dependence on $\mathcal{T}$ implicit, will be defined by specifying the value $\nu(\T_x)$ for all non-leaf $x\in\T$. Proposition 1 of \cite{Steph13} will then guarantee its existence and uniqueness. The definition will be analogous to the discrete case: there, the relative masses given to the subtrees in an $(a,b)$ split were $C(a,b)$ and $C(b,a),$ and here the relative masses given to the subtrees in a $(x,1-x)$ split will be $\ff(x)$ and $1-\ff(x)=\ff(1-x).$ 

Note that the construction is purely deterministic: for the purposes of Proposition~\ref{prop:def:nu}, one can take $\T$ to be any binary compact rooted real tree equipped with a measure $\mu$ supported by the leaves, and $ \mathsf{c}$ to be any symmetric function satisfying $ \mathsf{c}(x) \leq x$ for $x \in [0,1/2]$. Since our only application concerns the Brownian CRT with the specific function $\mathsf{c}=x^2(3-2x)$, for the sake of simplicity we shall keep using the the notations $\mathcal{T}$, $\mu$ and $\ff$ throughout this section, though they should be interpreted here with this more general framework in mind.

\medskip 

Given $x \in \T$, recall from above that we denote by $ \mathcal{T}_{[x]_t}$ and $\mathcal{T}_{[x]_{t+}}$, respectively, the subtree above height $t$ which contains $x$, and in the case of a branchpoint, the closure of the connected component of $\T\setminus{[x]_t}$ which contains $x$. Finally we set 
$$ u_x(t) := \frac{\mu\big( \T_{[x]_{t+}}\big)}{ \mu \big( \T_{[x]_t}\big)} \in [0,1].$$
In particular, since the tree $ \mathcal{T}$ is binary and compact, there are for any $x \in \T$ at most countably many branchpoints on the ancestral path of $x$, hence we have $u_x(t) < 1$ only for a countable number of heights $t \leq \h(x)$. In particular, if $(t_i : i \in  \mathbb{N})$ is an enumeration of the heights of those branchpoints then the family $ (\mu( \mathcal{T}_{[x]_{t_i}})-\mu( \mathcal{T}_{[x]_{t_i +}}): i \in \mathbb{N})$ is summable, and since the measure $\mu$ does not charge the skeleton, its sum is equal to $1- \mu( \mathcal{T}_x)$. We deduce then that for all $x \in \T$ we have
 \begin{eqnarray} \mu\big( \T_x\big) = \prod_{0 \leq t < \h(x)} u_x(t). \label{eq:noloss}\end{eqnarray}

\begin{prop}[Definition of $\nu$]\label{prop:def:nu} Suppose that $ \mathcal{T}$ is a compact binary rooted tree equipped with a measure $\mu$ supported by its leaves. If $\mathsf{c}:[0,1] \to [0,1]$ is symmetric with respect to $1/2$ and satisfies $ \mathsf{c}(x) \leq x$ for $x \in [0,1/2]$, then there exists a unique Borel measure $\nu$ supported by the leaves of $ \mathcal{T}$ specified by 
\begin{eqnarray} \nu( \T_x) := \prod_{0 \leq t < \h(x)} \ff( u_x(t)). \end{eqnarray}
\end{prop}
\begin{proof} Let, for $x\in\T,$ $m(x)=\prod_{0 \leq t < \h(x)} \ff( u_x(t)).$ Since we have $ \mathsf{c}(x) \leq x$ for $x \in [0,1/2]$ the product is well-defined and is non-zero as soon as $\prod_{0 \leq t < \h(x)}  u_x(t)$ is non-zero. One can straightforwardly check the following:
\begin{itemize}
    \item $m$ is a well-defined and nonnegative function, which is equal to $0$ exactly on the leaves of $ \mathcal{T}.$
    \item $m$ is nonincreasing, in the sense that, if $x$ is an ancestor of $y$, then $m(x)\geq m(y).$
    \item $m$ is ``left-continuous", in the sense that $\displaystyle m(x)=\lim_{s\to (\mathrm{ht}(x))^{-}} m([x]_s).$
    \item $m$ is ``additively right-continuous": if $x$ is not a branchpoint, then $m$ is continuous at $x$, while if $x$ is a branchpoint, then, letting $y$ and $z$ be two descendants of $x$ not in the same one of the two subtrees originating at $x$, then $m(x)=\displaystyle \lim_{s\to (\mathrm{ht}(x))^+} m([y]_s)+m([z]_s).$
\end{itemize}
These facts, combined, let us use \cite[Proposition 1]{Steph13}, which then tells us there exists a unique measure $\nu$ such that $\nu(\T_a)=m(a)$ for $a$, and that it has no atoms. With the notation introduced just before the Proposition, since $\mu$ does not charge the skeleton, we can also check that $1-\nu( \mathcal{T}_x) = \sum_{i \in \mathbb{N}} \big(\nu ( \mathcal{T}_{[x]_{t_i}}) -\nu ( \mathcal{T}_{[x]_{t_i +}})\big)$, in particular $\nu$ gives no mass to the ancestral path of $x$, and as a result is supported by the leaves of $ \mathcal{T}$.
\end{proof}

\subsection{Path towards a typical $\nu$-leaf and spine decomposition}
Recalling the definition of the Brownian dislocation measure in \eqref{eq:Dislocation}, we now build two measures $\pi_\mu$ and $\pi_\nu$ on $(0,\infty)$ defined by
\begin{align*}
\int_{0}^{\infty} \mathrm{d} \pi_\mu(s) \  \phi(s) &:= \sqrt{\frac{2}{\pi}} \int_{1/2}^{1} \left(\ff(x) \phi(-\log (x)) + \ff(1-x) \phi(-\log (1-x))\right)\pp(x)\dd x \\
&= \sqrt{\frac{2}{\pi}} \int_0^{1}  \ff(x) \phi\left(-\log (x)\right) \pp(x)\dd x
\end{align*} 
and
\begin{align*}  \int_{0}^{\infty} \mathrm{d} \pi_\nu(s) \  \phi(s) &:= \sqrt{\frac{2}{\pi}} \int_{1/2}^{1} \left(\ff(x) \phi(-\log (\ff(x))) + \ff(1-x) \phi(-\log (\ff(1-x)))\right)\pp(x)\dd x \\
&= \sqrt{\frac{2}{\pi}} \int_0^{1}  \ff(x) \phi\left(-\log ( \ff(x))\right)\pp(x)\dd x.
\end{align*} 
Consider then $\Pi_\mu$  a Poisson point process (P.p.p.) on $ \mathbb{R}_+ \times \mathbb{R}_+$ with intensity $ \mathrm{d} \pi_\mu \otimes \mathrm{d}t$. By replacing each atom $(p,t)$ of $\Pi_\mu$ by the atom $(-\log(\ff ( \mathrm{e}^{-p})),t)$ we obtain another Poisson point process with intensity $ \mathrm{d}\pi_\nu \otimes \mathrm{d}t$. We can then define two coupled (non-decreasing) subordinators $\xi_\mu$ and $\xi_\nu$  started from $0$, and with respective L\'evy measures $\pi_\mu$ and $\pi_\nu$, and obtained by summing the atoms of the respective P.p.p., i.e.,
\begin{equation}\label{defxi}
 \xi_\mu(t) = \sum_{\begin{subarray}{c}(p,s) \in \Pi_\mu \\ s \leq t\end{subarray}} p \quad \mbox{and} \quad \xi_\nu(t) = \sum_{\begin{subarray}{c}(p,s) \in \Pi_\nu \\ s \leq t \end{subarray}} p = \sum_{\begin{subarray}{c}(p,s) \in \Pi_\mu \\ s \leq t\end{subarray}} -\log(\ff ( \mathrm{e}^{-p})).\end{equation} 
Then we perform the Lamperti transformation with index $1/2$ using $\xi_\mu:$ we let
\begin{equation}\label{eq:timechange}\theta(t)=\inf\left\{u\geq 0: \int_0^u  \mathrm{e}^{-\frac12 \xi_{\mu}(r)}\mathrm d r > t\right\},\end{equation}
and build two processes $X_\mu$ (a so-called positive self-similar Markov process, see \cite{Lamp62}) and $X_\nu$ (which is not Markovian, since the time change uses information from $X_\mu$) defined for $t\geq0$ by
\begin{eqnarray}
(X_{\mu}(t),X_\nu(t)):=(\mathrm{e}^{-\xi_\mu(\theta(t))},\mathrm{e}^{-\xi_\nu(\theta(t))}). \label{eq:lamperti}\end{eqnarray}
Both of those processes are pure jump and non-increasing from $1$ to $0$, and are absorbed at $0$ at time \begin{eqnarray} \label{def:I} I:=\int_{0}^{\infty}\mathrm{e}^{-\frac12 \xi_\mu(t)}\mathrm{d}t\end{eqnarray} which is almost surely finite since $ \xi_\mu$ drifts to $+\infty$ at linear speed.

\begin{thm}[Spine decomposition]\label{thm:spinedec} Conditionally on the Brownian CRT $ \mathcal{T}$, let $L$ be a $\nu$-distributed random point of $\tbr.$ Then the bivariate process $$(\mu(\T_{[L]_{t+}}),\nu(\T_{[L]_{t+}}))_{t\geq 0}$$ (under the annealed law) is distributed as $(X_{\mu},X_{\nu}).$
As a consequence, $L$ is almost surely a leaf, and its height has the law of $I$.

Moreover, let $(t_i,i\in\N)$ be the times at which $u_L(\cdot) \in (0,1)$, and for all $i\in \N$ let $\T_i$ be the subtrees which branch off $\llbracket \rho,L\rrbracket$ at those times (ranked, say in decreasing order of their $\mu$-masses). Then, conditionally on the whole process $(\mu(\T_{[L]_{t}}),t\geq 0),$ the trees $$\big(u_L(t_i) \cdot \mu(\T_{L(t_i)}) \big)^{-1/2} \cdot \T_i, \quad i \geq 1$$ are independent Brownian CRT's.
\end{thm}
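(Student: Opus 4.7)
The plan is to identify the Brownian CRT $\T$ as a self-similar fragmentation tree (self-similarity index $\alpha=-1/2$, dislocation measure $\mathsf{Dis}$) and to describe the spine towards a $\nu$-typical leaf $L$ as a ``biased tagged particle'' in this fragmentation: at every splitting event on the spine, the spine continues in the side of relative $\mu$-mass $r$ with probability $\ff(r)$ (rather than the usual $\mu$-biased probability $r$). The bivariate mass process along the spine is then read off the Lamperti transform of the associated bivariate subordinator, recovering the construction of $(X_\mu,X_\nu)$.

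\textbf{Biased spine description.} By Proposition~\ref{prop:def:nu}, at any branch point of $\T$ with children subtrees of relative $\mu$-masses $r$ and $1-r$, the relative $\nu$-masses are $\ff(r)$ and $\ff(1-r)=1-\ff(r)$. Hence conditionally on $\T$, the path $\llbracket\rho,L\rrbracket$ to a $\nu$-typical $L$ is obtained by descending from the root and, at each branch point, making an independent choice between the two siblings with probability $\ff(r)$ for the side of relative $\mu$-mass $r$. Conditional on this spine, the off-spine subtrees remain intact pieces of the underlying fragmentation.

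\textbf{Lamperti time-change.} The fragmentation description says that a particle of mass $m$ splits into $(mx,m(1-x))$ for $x\in(1/2,1)$ at rate $m^{-1/2}\sqrt{2/\pi}\,\pp(x)\,\dd x$. Combined with the biased spine, at a height $t$ with $\mu(\T_{[L]_{t+}})=m$, the $\mu$-coordinate is multiplied by some $x\in(0,1)$ at rate $m^{-1/2}\sqrt{2/\pi}\,\ff(x)\pp(x)\,\dd x$ (using $\pp(x)=\pp(1-x)$ to symmetrize between the two sides of the split), and the $\nu$-coordinate is simultaneously multiplied by $\ff(x)$. The Lamperti transform of index $1/2$, namely the time-change $\theta$ from the paper, removes the factor $m^{-1/2}$: in the new time the log-mass $-\log\mu(\T_{[L]_{t+}})$ becomes a subordinator whose Lévy measure in the jump variable $s=-\log x$ is $\sqrt{2/\pi}\,\ff(x)\pp(x)\,\dd x$, which is exactly $\pi_\mu$. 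The coupled jumps of $-\log\nu(\T_{[L]_{t+}})$ by $-\log\ff(x)$ match the rule $p\mapsto-\log\ff(e^{-p})$ used to build $\Pi_\nu$ from $\Pi_\mu$. Inverting the time-change gives $(\mu(\T_{[L]_{t+}}),\nu(\T_{[L]_{t+}}))_{t\geq 0}\eqdist(X_\mu,X_\nu)$.

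\textbf{Consequences, independence, and main obstacle.} Since $\xi_\mu$ drifts to $+\infty$, $X_\mu$ is absorbed at $0$ at the a.s.\ finite time $I$, so $\mu(\T_{[L]_t})\to 0$ as $t\uparrow I$; since $\mu$ is supported on the leaves of $\T$, the point $L$ is a.s.\ a leaf and its height has the law of $I$. The independence and rescaling of the off-spine subtrees follow from the Markov/self-similarity property of the fragmentation: at each splitting event, conditional on the mass $m_i$ of the sibling, the sibling tree is an independent $\sqrt{m_i}$-scaled Brownian CRT. The principal technical task is justifying the Lamperti transform for this $\ff$-biased tagged particle; the standard references (Bertoin, Haas--Miermont, Stephenson) treat the $\mu$-biased case, so one must verify the analogous statement for $\ff$-biasing, which reduces to Proposition~\ref{prop:def:nu} combined with a routine conditioning argument establishing the conditional independence of the biased spine choices at distinct branches.
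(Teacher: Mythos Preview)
Your proposal is correct and follows essentially the same approach as the paper: a $\ff$-biased tagged fragment in the self-similar fragmentation, Lamperti transform to a coupled pair of subordinators, and self-similarity for the off-spine subtrees. The only substantive difference is that where you flag the ``principal technical task'' of justifying the Lamperti description for the $\ff$-biased spine as needing a routine conditioning argument, the paper simply invokes the \emph{bifurcator} formalism of Pitman--Winkel (their Proposition~1 for the biased tagged mass and Lemma~21 for the spinal decomposition), which handles exactly this situation off the shelf.
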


\begin{proof}
Informally, the idea is that the path to $L$ encounters binary splits at the same rate as the path to a uniform leaf, but crosses those splits differently, using the probability $\ff.$ This is formalised in \cite{PitmanWinkel2013} by the notion of \emph{bifurcator}. If $L'$ is a leaf with distribution $\mu,$ then the paths from $\rho$ to $L$ and to $L'$ will encounter countable numbers of binary branchpoints. At any common such branchpoint, let $x$ and $1-x$ be the relative $\mu$-masses of both pending subtrees, choosing $x$ to be the mass of the one containing $L$. Then the probability that $L'$ is in the other subtree, known as the ``switching probability" in \cite{PitmanWinkel2013}, is equal to $\ff(1-x)$. While the process $(Y(t),t\geq 0)$ defined by $Y(t)=\mu(\T_{[L']_{t+}})$ is well-known \cite[Section 4]{bertoin2002self} to be the Lamperti transform of a subordinator with L\'evy measure $\pi_{\mathrm{unif}},$  defined by
\begin{align*}
\int_{0}^{\infty} \mathrm{d} \pi_{\mathrm{unif}}(s) \  \phi(s) &= \sqrt{\frac{2}{\pi}} \int_{1/2}^{1} \left(x \phi(-\log (x)) + (1-x) \phi(-\log (1-x))\right)\pp(x)\dd x \\
&= \sqrt{\frac{2}{\pi}} \int_0^{1}  x \phi(-\log (x))\pp(x)\dd x,
\end{align*}
Proposition 1 from \cite{PitmanWinkel2013} states then that the process $\mu(\T_{[L]_{t+}}),t\geq 0$ will be the Lamperti transform of a subordinator where the initial $x$ in the integrand of the above is replaced by $\ff(x)$, giving us $\pi_\mu.$

That $(\nu(\T_{[L]_{t+}}),t\geq 0)$ has the appropriate paired distribution comes from the fact that its multiplicative jumps are obtained by applying $\ff$ to those of $(\mu(\T_{[L]_{t+}}),t\geq 0),$ which is how $X_{\nu}$ is derived from $X_{\mu}.$

Finally, the spinal decomposition itself is again a consequence of \cite{PitmanWinkel2013}, this time Lemma 21.
\end{proof}
\subsection{Applications: height and dimension}

Let us see two direct applications of Theorem \ref{thm:spinedec}. In the first one we compute the (annealed) law of the height of a random $\nu$-leaf, and in the second one we find the almost sure Hausdorff dimension of $\nu$.

\begin{prop}\label{prop:hd} Let $ \T$ be a Brownian CRT, and conditionally on $\T$, let $ L \in \T$ be sampled according to $\nu$. Then $ \mathrm{ht}(L)$ is a continuous random variable on $\R_+$, and its density is $8x^3\mathrm{e}^{-2x^2}.$
\end{prop}

\begin{rem} An integration by parts shows that this dominates the Rayleigh distribution ($4x\mathrm{e}^{-2x^2} \dd x,$ which is the distribution of the height of a uniform leaf) stochastically. This is expected, as the path to $L$ favours large subtrees more than the path to a uniform leaf. However, we have no natural explanation for the surprisingly simple form of this density. As one referee pointed out, this density is that of $\sqrt{\frac{G_2}{2}}$, where $G_2$ is a standard $\Gamma(2)$ random variable.
\end{rem}

\begin{proof}
We know that $\h(L)$ has the same distribution as $I=\int_{0}^{\infty}\mathrm{e}^{-\frac12 \xi_\mu(t)}\mathrm{d}t.$
The results of \cite{Carmona/Petit/Yor} show that the integer moments of this distribution can be linked to the Laplace exponent of $\xi_\mu$, which we call $\Phi$, by the following formula:

\[\mathbb{E}[(\mathrm{ht}(L))^k]=\frac{k!}{\prod_{i=1}^k \Phi(\frac{i}{2})}.\]
However, in our setting, we have for $\alpha\geq 0$
\begin{align*}\Phi(\alpha)&=\sqrt{\frac{2}{\pi}}\int_{1/2}^{1}\left(\ff(x)(1-x^{\alpha})+\ff(1-x)(1-(1-x))^{\alpha}\right)\pp(x)\dd x \\
&=\sqrt{\frac{2}{\pi}}\int_{1/2}^{1}\left(1-\ff(x)x^{\alpha}-\ff(1-x)(1-x)^{\alpha}\right)\pp(x)\dd x\\
&=\frac1{\sqrt{2\pi}}\int_{0}^{1}\left(1-\ff(x)x^{\alpha}-\ff(1-x)(1-x)^{\alpha}\right)\pp(x)\dd x.\numberthis \label{leafheightintegral}
\end{align*}

The latter integral can be computed (see Appendix \ref{sec: Appendix}), and we obtain $\Phi(\alpha)=2\sqrt{2}\alpha\frac{\Gamma(3/2+\alpha)}{\Gamma(2+\alpha)}.$ Hence
\[\mathbb{E}[(\mathrm{ht}(L))^k]=\frac{\Gamma(\frac{4+k}{2})}{2^{k/2}}.\]
A straightforward induction shows that $\frac{\Gamma(\frac{4+k}{2})}{2^{k/2}}=\int_0^{\infty} 8x^{3+k}\mathrm{e}^{-2x^2} \mathrm{d}x.$ Given that the probability distribution with density $8x^3\mathrm{e}^{-2x^2}$ is determined by its moments (since its moment generating function has positive radius of convergence), this ends the proof.
\end{proof}

\paragraph{Hausdorff dimension.}\label{section:continuous Hausdorff}

The fact that the Hausdorff dimension of the measure $\nu$ is $2 \dH$ follows from the coming proposition together with standard results about Hausdorff dimension (see e.g.~\cite[Lemma 4.1]{lyons1995ergodic} and~\cite[\S 14]{billingsley1965}):

\begin{prop}\label{prop:continuous Hausdorff} Almost surely for $\mathcal{T}$ and a.e.\ for $\nu( \mathrm{d}\ell)$ we have 
$$ \lim_{r \to 0} \frac{\log \nu \big( B_r(\ell)\big) }{\log r} = 2 \dH.$$
\end{prop}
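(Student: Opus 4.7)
The plan is to combine the spine decomposition of Theorem \ref{thm:spinedec} with the strong law of large numbers for L\'evy subordinators, and transfer the resulting annealed statement into the claimed quenched $\nu$-a.e.\ statement via Fubini. By Fubini, it suffices to prove that for $L$ a $\nu$-sampled leaf under the annealed law, $\log \nu(B_r(L))/\log r \to 2\dH$ almost surely as $r \to 0$. Along the spine to $L$, Theorem \ref{thm:spinedec} gives $\log \mu(\T_{[L]_{t+}}) = -\xi_\mu(\theta(t))$ and $\log \nu(\T_{[L]_{t+}}) = -\xi_\nu(\theta(t))$. Both subordinators have finite first moments (checked by examining the integrands defining $\pi_\mu$ and $\pi_\nu$ near $0$ and $1$), so the strong law of large numbers for subordinators yields
\[
\frac{\log \nu(\T_{[L]_{t+}})}{\log \mu(\T_{[L]_{t+}})} \longrightarrow \frac{\Phi_\nu'(0)}{\Phi_\mu'(0)} \quad \text{as } t \to \mathrm{ht}(L), \text{ a.s.}
\]
A direct computation of both derivatives (paralleling the Appendix's derivation of $\Phi_\mu$, which gives $\Phi_\mu'(0) = \sqrt{2\pi}$) should identify this ratio as $\dH = 3(2-\sqrt{3})$.

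To translate this into a statement about the radius $r$, I would use the self-similarity of the Brownian CRT at index $-1/2$: conditionally on the spine up to height $t$, the pair $(\T_{[L]_{t+}}, L)$ is distributed as $\sqrt{\mu(\T_{[L]_{t+}})}$ times an independent copy of $(\T, L)$. In particular $\mathrm{ht}(L) - t = \sqrt{\mu(\T_{[L]_{t+}})} \cdot \tilde{I}$, with $\tilde{I}$ an independent copy of the a.s.\ finite positive quantity $I$ from \eqref{def:I}, so $\log(\mathrm{ht}(L) - t) = \tfrac{1}{2}\log \mu(\T_{[L]_{t+}}) + O(1)$ as $t \to \mathrm{ht}(L)$. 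Setting $r = \mathrm{ht}(L) - t$ and combining with the previous step,
\[
\frac{\log \nu(\T_{[L]_{(\mathrm{ht}(L) - r)+}})}{\log r} \longrightarrow 2\dH \quad \text{as } r \to 0, \text{ a.s.}
\]

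It remains to compare $\nu(B_r(L))$ with $\nu(\T_{[L]_{(\mathrm{ht}(L) - r)+}})$. The inclusion $B_r(L) \subseteq \T_{[L]_{\mathrm{ht}(L) - r}}$ (any $x$ with $d(x, L) \leq r$ satisfies $\mathrm{ht}(x \wedge L) \geq \mathrm{ht}(L) - r$ and hence is a descendant of $[L]_{\mathrm{ht}(L) - r}$), combined with the atomlessness of $\nu$, gives $\nu(B_r(L)) \leq \nu(\T_{[L]_{(\mathrm{ht}(L) - r)+}})$ and hence $\liminf_{r \to 0}\log \nu(B_r(L))/\log r \geq 2\dH$. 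The matching bound $\limsup_{r \to 0}\log \nu(B_r(L))/\log r \leq 2\dH$, that is, the lower bound $\nu(B_r(L)) \geq r^{2\dH + o(1)}$ for small $r$, is what I expect to be the main obstacle: subtrees branching off the spine close to $L$ may extend to heights well above $\mathrm{ht}(L)$, so they need not lie entirely in $B_r(L)$. My plan here is to exploit the Poisson structure of the jumps of $\xi_\mu$ together with the conditional independence of the subtrees branching off the spine (both from Theorem \ref{thm:spinedec}): at Lamperti times $s$ with $e^{-\xi_\mu(s)/2} \asymp r$, each branching subtree has mass $O(r^2)$ and, by self-similarity with the a.s.\ finite diameter of the CRT, diameter $O(r)$ with non-degenerate probability, and is therefore contained in $B_r(L)$; its $\nu$-mass is of order $r^{2\dH + o(1)}$ by applying the reasoning of the first two paragraphs inside it. A conditional Borel--Cantelli argument using independence across disjoint scales of $r$ should then upgrade this to the required almost-sure lower bound, concluding the proof.
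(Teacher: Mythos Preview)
Your overall strategy matches the paper's proof closely: both use Theorem~\ref{thm:spinedec} to reduce to the pair $(\xi_\mu,\xi_\nu)$, invoke the strong law of large numbers for these subordinators (the paper writes $\mathbb{E}[\xi_\mu(1)]$ and $\mathbb{E}[\xi_\nu(1)]$ explicitly and finds their ratio equal to $\dH$), obtain the $\liminf$ bound from the trivial inclusion $B_r(L)\subseteq \T_{[L]_{\mathrm{ht}(L)-r}}$, and handle the $\limsup$ bound by a Borel--Cantelli-type argument on the side subtrees branching off the spine. For the latter the paper is more concrete than your sketch: it restricts to jumps of $\xi_\mu$ of size at least $\log 2$, so that the side subtree carries at least half the current $\mu$-mass and (because $\ff(x)\le x$ on $[0,1/2]$) at least half the current $\nu$-mass; it then uses that with fixed probability $\mathbb{P}(\h(\T)\le 1)$ this subtree has height at most the square root of its $\mu$-mass, and finds an a.s.\ sequence of such ``good'' jump times $t_i\to\infty$ with $t_{i+1}/t_i\to 1$. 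Note you do not need to ``apply the reasoning inside'' the side subtree: its $\nu$-mass is read off directly from the jump of $X_\nu$.

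There is one genuine gap in your step translating $\log\mu(\T_{[L]_{t+}})$ to $\log r$. Your self-similarity argument shows that \emph{for each fixed $t$} the quantity $\tilde I_t:=(\mathrm{ht}(L)-t)/\sqrt{\mu(\T_{[L]_{t+}})}$ has the law of $I$; it does \emph{not} show $\log\tilde I_t=O(1)$ a.s.\ as $t\to\mathrm{ht}(L)$, and in fact this is false in general (the $\tilde I_t$ are correlated and one can check $\inf_t\tilde I_t=0$ a.s.). What you actually need, and what does follow from the LLN for $\xi_\mu$, is the weaker statement $\log\tilde I_t=o\bigl(\log\mu(\T_{[L]_{t+}})\bigr)$. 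The paper bypasses this issue entirely by working directly in Lamperti time: from $\xi_\mu(s)/s\to c_\mu$ and $\xi_\nu(s)/s\to c_\nu$ it deduces, via the change of variables $\varepsilon=\int_{\theta(I-\varepsilon)}^\infty e^{-\xi_\mu(r)/2}\,\dd r$, that $\log X_\mu(I-\varepsilon)/\log\varepsilon\to 2$ and $\log X_\nu(I-\varepsilon)/\log\varepsilon\to 2\dH$, which is exactly the statement you want and avoids any uniformity claim on $\tilde I_t$.
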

\proof Recall that $\gamma=3(2-\sqrt{3}),$ and the definitions of $\pp(x)$ in \eqref{Stirlinglimit}, $\cc(x)$ in \eqref{eq:cc}, $\xi_\mu$ and $\xi_\nu$ in \eqref{defxi}, $X_\mu$ and $X_\nu$ in \eqref{eq:lamperti}, the time change $\theta$ in \eqref{eq:timechange} and extinction time $I$ in \eqref{def:I}. By Theorem \ref{thm:spinedec}, we can assume that the bivariate process $(\mu(\T_{[L]_{t+}}),\nu(\T_{[L]_{t+}}))_{t\geq 0}$ is given by $(X_\mu,X_\nu)$.

The local rate of growth of $\nu$ near a $\nu$-typical point is related to the asymptotic behaviour of $X_\mu$ and $X_\nu$ near their extinction time. To understand this, let us compute the first moments of the L\'evy processes $\xi_\mu$ and $\xi_\nu:$ we have 
$$ \mathbb{E}[\xi_\mu(t)] = t \int_0^{\infty} s\ \mathrm{d}\pi_\mu(s)  = \sqrt{2 \pi} \quad \mbox{and}\quad  \mathbb{E}[\xi_\nu(t)] = t \int_0^{\infty} s\ \mathrm{d}\pi_\nu(s)  = \dH \sqrt{2\pi}.$$
Since the L\'evy measure integrates the identity function, by the law of large numbers we have $t^{-1}\xi_\mu (t)  \to \sqrt{2 \pi}$   and $t^{-1}\xi_\nu (t)  \to   \dH \sqrt{2\pi}$ as $t \to \infty$ almost surely. Through the Lamperti transformation \eqref{eq:lamperti}, this turns into estimate on the time change $\theta$ since for $\veps>0$,
\[\int_{\theta(I-\veps)}^{\infty} \mathrm{e}^{-\frac12 \xi_\mu(t)}\mathrm dt=\veps, \]
from which we straightforwardly deduce using the above law of large numbers that 
\[\theta(I-\veps)\underset{\veps\to 0}\sim-\frac2{\sqrt{2\pi}}\log \veps\] almost surely. Substituting back into \eqref{eq:lamperti}, we find using the same law of large numbers that 
$$\lim_{ \varepsilon \to 0}\frac{\log X_\mu(I - \varepsilon)}{\log \varepsilon} =2\quad \mbox{and} \quad \lim_{ \varepsilon \to 0}\frac{\log X_\nu(I - \varepsilon)}{\log \varepsilon} = 2\dH,
$$ also almost surely. 

Now, by the first item of the spine decomposition (Theorem \ref{thm:spinedec}), we clearly have $\nu\big(B_{ \varepsilon }(L)\big) \leq X_\nu(I - \varepsilon)$ as a process in $ \varepsilon$ and together with the last display this entails that 
$$ \liminf_{ \varepsilon \to 0} \frac{\log \nu \big( B_\varepsilon(L)\big) }{\log  \varepsilon} \geq  2 \dH, \quad \mbox{a.s.}.$$
Let us now prove the upper bound for the dimension via a lower bound on $\nu \big( B_\varepsilon(L)\big)$. For this, we concentrate on jumps of $\xi_\mu$ of size at least $\log 2$. Remember that thanks to the spine decomposition, conditionally on such a jump of size $s\geq \log 2$ appearing at time $t$, the subtree $\T_{[L]_{u}}\setminus \T_{[L]_{u+}} \cup \{[L]_{u}\}$ with $t=\theta(u)$ (which is the subtree which is ``left on the side'' at height $u$) is an independent Brownian CRT with $\mu$-mass $$(1- \mathrm{e}^{-s}) \mathrm{e}^{- \xi_\mu(t-)} \geq \frac{1}{2} \mathrm{e}^{- \xi_\mu(t-)}.$$ Notice two things: first, the $\nu$-mass of this subtree is at least $ \mathrm{e}^{- \xi_\nu(t-)}/2$ (because $\ff(x) \leq x$ for $x \in [0,1/2]$) and second, this tree has a fixed probability equal to $ \mathbb{P}( \h(\mathcal{T}) \leq 1)$ of being of height less than the square root of its $\mu$-mass. Under those conditions, this subtree is a subset of $B_{ \varepsilon}(L)$ for $\varepsilon \geq  \mathrm{e}^{-\frac12 \xi_\mu(t-)} + \int_{t}^\infty \mathrm{e}^{-\frac12 \xi_\mu(s)} \mathrm{d}s$ and we have
\begin{eqnarray} \label{eq:lowerbound12}\nu \big(B_{ \varepsilon}(L)\big) \geq \frac12 \mathrm{e}^{- \xi_\nu(t-)}. \end{eqnarray} By the law of large numbers on $\xi_\mu$ and $\xi_\nu,$ we can take $\varepsilon = c_1 \mathrm{e}^{- t \sqrt{2\pi}/2}$  and we have  $\nu \big(B_{ \varepsilon}(L)\big) \geq c_2 \mathrm{e}^{-  t \sqrt{2\pi} \gamma/2} = \varepsilon^{2\gamma + o(1)}$ for some $c_1,c_2>0$. The preceding inequality is only valid for those $ \varepsilon$ associated to a jump time $t$ satisfying the above conditions. However, if we can find a family $t_i \to \infty$ of such times satisfying 
\begin{eqnarray} \label{eq:goaldensity} 0 < t_1 < t_2 < \cdots < t_i \xrightarrow[i \to \infty]{} \infty \quad \mbox{and} \lim_{t \to \infty} \frac{t_{i+1}}{t_i} =1, \end{eqnarray} 
then the bound $\nu \big(B_{ \varepsilon}(L)\big) \geq  \varepsilon^{2\gamma + o(1)}$ still holds as $ \varepsilon \to 0$ and proves the lower bound on the Hausdorff dimension. To see \eqref{eq:goaldensity}, notice that in the time parametrization of $\xi_\nu$, the intensity $C>0$ of such times is positive: by a standard law of large numbers we can almost surely find a diverging sequence of jump times $t_i$ satisfying all of the above assumptions and which satisfy $t_i \sim C i$. This entails \eqref{eq:goaldensity} and completes the proof.\endproof
\subsection{Scaling limits}\label{sec:scalinglimitproof}
The aim of this section is to prove Theorem \ref{thm:ghpanddim}. For the reader's convenience, we quickly recall the definition of the (pointed) Gromov--Hausdorff--Prokhorov convergence and  refer to \cite{MR3035742,khezeli2020metrization} for more details. A weighted pointed compact metric space $(M,d,\rho,\mu)$ is a compact metric space equipped with a finite Borel measure $\mu$ and a distinguished point $\rho\in M$. We let $\mathbb{M}$ be the set of all isometry classes of weighted pointed compact metric spaces. To lighten notation, we shall often identify a compact weighted metric space with its equivalence class. We equip $\mathbb{M}$ with the classical Gromov--Hausdorff--Prokhorov metric, defined for every $\textbf{M}:=(M,d, \rho,\mu)$ and  $\textbf{M}^{\prime}:=(M^{\prime},d^{{\prime}},\rho^\prime,\mu^{\prime})$ in $\mathbb{M}$ by:
$$d_{\mathrm{GHP}}\big(\textbf{M},\textbf{M}^{\prime}\big):=\inf\limits_{\phi,\phi^{\prime}}\Big( \delta_{\text{H}}\big(\phi(M),\phi^{\prime}(M^{\prime})\vee \delta_{\text{P}}\big(\phi_* \mu,\phi^{\prime}_* \mu^{\prime}\big)\vee \delta(\phi(\rho),\phi^\prime(\rho^\prime)\Big)~,$$
where the infimum is taken over all isometries $\phi$, $\phi^{\prime}$ from $M$, $M^{\prime}$ into a metric space $(Z, \delta)$ and $\delta_{\text{H}}$ (resp.~$\delta_{\text{P}}$) stands for the classical Hausdorff distance (resp.~the Prokhorov distance) in $Z$. The space $(\mathbb{M},d_{\mathrm{GHP}})$ is then a  Polish space.

Our starting point is the well-known convergence in distribution from~\cite{Ald93}:
\begin{equation}\label{eq:gh2pointed}\left(\frac{1}{\sqrt{n}}\tau_n,\rho_n,\mu_n\right) \underset{n\to\infty}{\overset{(d)}\longrightarrow} (2\sqrt{2}\T,\rho,\mu),
\end{equation}
for the pointed Gromov--Hausdorff--Prokhorov metric, where $\mu_n$ is the uniform measure on the vertices of $\tau_n$. By analogy with this notation, we shall also use the notation $\nu_n$ for the measure $\nu_{\tau_n}$ introduced in Section~\ref{section:building schemes}. We saw in Section \ref{section:building schemes}   and in Proposition \ref{prop:def:nu} that the measures $\nu_n$ and $\nu$ are obtained as measurable functions of $(\frac{1}{\sqrt{n}}\tau_n,\rho_n,\mu_n)$ and $(2\sqrt{2}\T,\rho,\mu)$ respectively. Futhermore, both measures are constructed similarly by modifying the splitting of the reference measures $\mu_n$ and $\mu$ using the functions $C(a,b)$ and $ \mathsf{c}$. In view of \eqref{eq:gh2pointed} and the convergence \eqref{eq:cc}, it is natural to expect that those functions are almost ``continuous", letting us adjoin $\nu_n$ and $\nu$ to \eqref{eq:gh2pointed} using deterministic arguments.\medskip 

\noindent \textbf{Proof of Theorem \ref{thm:ghpanddim}.} Using the Skorokhod representation theorem, we can assume that \eqref{eq:gh2pointed} holds almost surely, and the proof will proceed deterministically. Next, by a general representation theorem for Gromov-Hausdorff-type topologies~\cite[Lemma 2.5]{khezeli2023unified}, we can
embed all trees $\tau_n$ and $ \mathcal{T}$ isometrically in the same compact 
metric space $(E,d_E)$ --we keep the same notation for simplicity--, in such a way that
\begin{equation*}
\left( \frac{1}{\sqrt{n}} \tau_n,\rho_n,\mu_n\right) \underset{n\to\infty}{\longrightarrow} (2\sqrt{2}\T,\rho,\mu),
\end{equation*}
holds in pointed Hausdorff-Prokhorov sense inside $E$. Fix then any point $x$ in the skeleton of $\T$ which is not a branchpoint. Lemma \ref{lemma:annoyingmetricthing} shows that there exists a sequence $(x_n,n\in\N)$ with $x_n\in\tau_n$ such that $x_n \to x$ in $E$ and $\mu_n((\tau_n)_{x_n}) \to \mu(\T_x)$. We will now show that we also have 
\begin{equation}\label{eq:GHPlimofnu} \lim_{n \to \infty} \nu_n( (\tau_n)_{x_n})= \nu(\T_x),\end{equation}
which will guarantee via the Portmanteau theorem and \cite[Proposition 1]{Steph13}, that any subsequential limit of $(\nu_n)$ is equal to $\nu,$ ending the proof of Theorem \ref{thm:ghpanddim} since $ \mathcal{T}$ is compact.

%%%%% Preuve version moins détaillée
%{\color{red}

Let us denote by $h_n$ the height of $x_n$ and by $(a^{(n)}_i,b^{(n)}_i)$ the sizes of the two subtrees above the ancestor of $x_n$ at height $0\leq i < h_n$, with the convention that $a^{(n)}_i$ is the size of the subtree containing $x_n$. It is then straightforward, but rather long, to prove that we also have the convergence 
\begin{eqnarray} \label{eq:pointlimit} \left( \left(\frac{a_i^{(n)}}{a_i^{(n)}+b_i^{(n)}} , \frac{i}{ 2\sqrt{2n}}\right) : 0 \leq i < h_n\right) \xrightarrow[]{n \to \infty} \left( \left(\frac{\mu(\T_{[x]_{t+}})}{\mu(\T_{[x]_{t}})},t\right) : 0 \leq t < \mathrm{ht}(x)\right) \end{eqnarray}
in terms of point processes on $ \mathbb{R}_+^* \times \mathbb{R}_+$. In particular, recall that \eqref{eq:noloss} holds almost surely.
Recall first from the construction of $\nu_n$ and $\mu_n$ that we have 
\begin{eqnarray} \label{rappelnun} \nu_n( (\tau_n)_{x_n}) = \prod_{i=0}^{h_n-1} C(a_i^{(n)}, b_i^{(n)})  \qquad \mbox{and} \qquad \mu_n( (\tau_n)_{x_n}) = \prod_{i=0}^{h_n-1} \frac{a_i^{(n)}}{a_i^{(n)}+b_i^{(n)}+1}. \end{eqnarray}
On the continuous side, if we set  $a_t = \mu(\T_{[x]_{t+}})$ and $b_t = \mu(\T_{[x]_{t}})-\mu(\T_{[x]_{t+}})$ for $0 \leq t \leq \mathrm{ht}(x)$ (note that $b_t=0$ when $[x]_t$ is not a branchpoint), then by \eqref{eq:noloss} and Proposition \ref{prop:def:nu} we have 
\begin{eqnarray} \label{rappelnu}\nu(x) = \prod_{t <  \mathrm{ht}(x)}  \mathsf{c}(\frac{a_t}{a_t+b_t}) \qquad \mbox{and} \qquad \mu(x) = \prod_{t < \mathrm{ht}(x)}  \frac{a_t}{a_t+b_t}. \end{eqnarray}
On the one hand, the convergences \eqref{eq:pointlimit}, \eqref{eq:cc} and Fatou's lemma directly imply that 
$$ \limsup_{n \to \infty} \nu_n( (\tau_n)_{x_n}) =  \limsup_{n \to \infty}\prod_{i=0}^{h_n-1} C(a_i^{(n)}, b_i^{(n)}) \leq  \prod_{t < \mathrm{ht}(x)}  \mathsf{c}(\frac{a_t}{a_t+b_t}) =\nu(x).$$
To get the matching lower bound $\nu(x)\leq \liminf_{n \to \infty} \nu_n( (\tau_n)_{x_n})$, notice that the same argument yields 
$$ \limsup_{n \to \infty} \mu_n( (\tau_n)_{x_n}) = \prod_{i=0}^{h_n-1} \frac{a_i^{(n)}}{a_i^{(n)}+b_i^{(n)}+1} \leq \mu(x) = \prod_{t < \mathrm{ht}(x)}  \frac{a_t}{a_t+b_t};$$ here, however, we know that the matching lower bound holds since $\lim_{n \to \infty} \mu_n( (\tau_n)_{x_n}) = \mu( \mathcal{T}_x)$ is known.  Using the fact that $C(a,b) \geq \frac{a}{a+b+1}$ for $a \geq b$, we deduce that 
$$ 1\geq \lim_{\varepsilon \to 0} \prod_{i=0}^{h_n-1} C(a_i^{(n)}, b_i^{(n)})\mathbf{1}_{b_i^{(n)} \leq \varepsilon a_i^{(n)}} \geq \lim_{\varepsilon \to 0} \prod_{i=0}^{h_n-1} \frac{a_i^{(n)}}{a_i^{(n)}+b_i^{(n)}+1}\mathbf{1}_{b_i^{(n)} \leq \varepsilon a_i^{(n)}} = 1,$$
from which \eqref{eq:GHPlimofnu} follows.

\qed

\section{Discrete fractal properties of  leaf-growth measures}
In this section, we study the discrete fractal properties of $\nu_{\tau_n}$ and focus in particular on Theorem~\ref{thm:discrete multifractal spectrum} and Theorem~\ref{thm:discretefractal}. Those results will be proved by investigating the asymptotic behaviour of the random variable $M_n=\nu_{\tau_n}(L_n)$, where $\tau_n$ is uniform in $\sT_n$ and $L_n$ is, conditionally on $\tau_n$, a random leaf of $\tau_n$ sampled according to the leaf-growth measure $\nu_{\tau_n}$. This notation will be used throughout the section.

\subsection{Typical exponent of the leaf-growth mass of a random leaf}\label{section:discrete Hausdorff}
Theorem \ref{thm:discretefractal} is a straightforward consequence of the following:
\begin{prop}
Setting $\dH=3(2-\sqrt{3})$, we have the following convergence in probability:
\[\frac{-\log M_n}{\log n} \underset{n\to\infty}{\overset{\pr}{\longrightarrow}} \dH.\]
\end{prop}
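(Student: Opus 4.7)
The plan is to derive this convergence in probability from Theorem~\ref{thm:discrete multifractal spectrum} via an exponential-Chebyshev / G\"artner--Ellis-type argument, using only a local analysis of $\beta$ near the origin. Since $L_n$ is sampled from $\nu_{\tau_n}$ conditionally on $\tau_n$, size-biasing immediately gives, for every $\alpha \in \R$,
$$\E\left[M_n^\alpha\right] \;=\; \E\!\left[\sum_{l \in \tau_n} \nu_{\tau_n}(l)^{\alpha+1}\right] \;=\; n^{-\beta(\alpha)+o(1)},$$
with $\beta$ the function from Theorem~\ref{thm:discrete multifractal spectrum}. Concentration of $-\log M_n / \log n$ around the typical value $\beta'(0)$ will then follow from Markov's inequality, provided we know that $\beta'(0) = \dH$.

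To compute $\beta'(0)$, first note that $I(0,0) = 0$ (because $\ff(x)+\ff(1-x)=1$), so $\beta(0) = 0$; applying the implicit function theorem to $I(\alpha, \beta(\alpha)) = 0$ at $(0,0)$ yields $\beta'(0) = -\partial_\alpha I(0,0) / \partial_\beta I(0,0)$. Differentiating under the integral sign and exploiting the symmetry $\pp(x)=\pp(1-x)$ (by the change of variables $y=1-x$ in one of the two terms) one finds
$$\partial_\alpha I(0,0) \;=\; 2\int_0^1 \pp(x)\,\ff(x)\log \ff(x)\,\dd x, \qquad \partial_\beta I(0,0) \;=\; -2\int_0^1 \pp(x)\,\ff(x)\log x\,\dd x.$$
These are, up to the common multiplicative prefactor $\sqrt{2/\pi}$, precisely the first moments of the L\'evy measures $\pi_\nu$ and $\pi_\mu$ of Section~\ref{section:convergence}, which the proof of Proposition~\ref{prop:continuous Hausdorff} identifies as $2\dH\sqrt{2\pi}$ and $2\sqrt{2\pi}$ respectively. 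Their ratio gives $\beta'(0) = \dH$.

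Finally, the concentration step. Fix $\epsilon > 0$; for $\alpha \in (0, \alpha_0)$ small, Markov's inequality gives
$$\pr\big(M_n \geq n^{-\dH+\epsilon}\big) \;\leq\; n^{\alpha(\dH-\epsilon)}\,\E[M_n^\alpha] \;=\; n^{\alpha(\dH-\epsilon) - \beta(\alpha) + o(1)}.$$
Since $\beta(\alpha) = \alpha\dH + o(\alpha)$ near $0$, the exponent equals $-\alpha\epsilon + o(\alpha) + o(1)$, negative for $\alpha$ small enough and $n$ large, so $\pr(M_n \geq n^{-\dH+\epsilon}) \to 0$. Applying the same bound to $M_n^{-\alpha}$ and using $\beta(-\alpha) = -\alpha\dH + o(\alpha)$ yields symmetrically $\pr(M_n \leq n^{-\dH-\epsilon}) \to 0$.

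The main technical check, and the step that I expect to be slightly delicate, is to verify that $\beta$ is well-defined and smooth in a neighbourhood of $0$ so that the implicit function theorem actually applies. By dominated convergence this reduces to checking that the integrand of $I(\alpha, \beta)$ is dominated by an integrable function for $(\alpha, \beta)$ near $(0,0)$: near $x = 0$ it behaves like $x^{2\alpha+1/2-\beta}$, integrable as soon as $\beta < 2\alpha + 3/2$ (so, on a neighbourhood of the origin), and near $x = 1$ the subtraction of $1$ kills the otherwise non-integrable singularity coming from $\pp(x) \sim (1-x)^{-3/2}$. Positivity of $\partial_\beta I$ in a neighbourhood is clear from the formula, so the implicit function theorem gives a smooth $\beta$. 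A minor further point is that Theorem~\ref{thm:discrete multifractal spectrum} must be used at small \emph{negative} $\alpha$, which is legitimate since $\nu_{\tau_n}(l)^{1+\alpha} \leq 1$ when $\alpha \in (-1,0)$, keeping the relevant sums well-controlled.
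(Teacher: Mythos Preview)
Your argument is correct and follows a genuinely different route from the paper. The paper proves the proposition directly by tracking the two processes $\Xmun$ and $\Xnun$ (the $\mu$- and $\nu$-log-masses along the spine to $L_n$), Poissonising in time, computing their predictable compensators, and running a martingale/Doob argument to obtain a weak law of large numbers for each; the ratio of the limiting drifts $\frac{1}{2\sqrt\pi}\int_0^1(-\log\ff(x))\ff(x)\pp(x)\,\dd x$ and $\frac{1}{2\sqrt\pi}\int_0^1(-\log x)\ff(x)\pp(x)\,\dd x$ is precisely $\dH$. Your proof instead treats the proposition as the ``law of large numbers at the origin'' of the multifractal spectrum: you invoke Theorem~\ref{thm:discrete multifractal spectrum} (whose proof in Section~\ref{section:multifractal spectrum} is independent of this proposition, so there is no circularity), compute $\beta'(0)$ by implicit differentiation of $I(\alpha,\beta)=0$, and close with a Chernoff bound. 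The identity $\beta'(0)=\dH$ that you obtain is exactly the same ratio of integrals, so the two proofs meet at the same computation but arrive there by different means.

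What each approach buys: the paper's spine analysis is self-contained, makes the link with the continuous picture of Section~\ref{section:convergence} transparent, and in principle can be pushed further (fluctuations, almost-sure statements). Your route is considerably shorter and conceptually cleaner --- it exhibits the typical exponent as the derivative at zero of the free-energy function $\beta$, which is the standard large-deviations picture --- at the cost of relying on Theorem~\ref{thm:discrete multifractal spectrum} as a black box. One small point worth tightening: to apply the implicit function theorem you need $I$ to be $C^1$ near the origin, which requires dominating the \emph{derivatives} of the integrand (with their extra $\log x$ and $\log\ff(x)$ factors), not just the integrand itself; this is routine since logarithmic factors do not affect the integrability analysis you already sketched, but it should be said.
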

\begin{proof} For $k\in\N,$ let $L_n$ be a $\nu_{\tau_n}$-distributed random leaf of $\tau_n.$ Define two processes $\xmun(\cdot)$ and $\xnun(\cdot)$ as follows:
\begin{itemize}
    \item For $k \leq \mathrm{ht}(L_n)$, let $x$ be the ancestor of $V_n$ in $\tau_n$ with height $k$. Define $\xmun(k)$ to be $\mu_n$-mass of the subtree of $\tau_n$ rooted at $x$, and $\xnun(k)$ to be the $\nu_{\tau_n}$-mass of that subtree.
    \item For $ k> \mathrm{ht}(L_n),$ let $\xmun(k)=\xnun(k)=0$.
\end{itemize}
It is possible to show that the scaling limit of $\xmun$ is $X_\mu,$ however here we are interested in the large-time behaviour of $\xmun$ and $\xnun.$ Specifically, note then that $\h(L_n)$ is the time at which the process $X^{(n)}_{\mu}$ hits the value $\frac1{2n+1}$, and that $M_n=X^{(n)}_{\nu}(\h(L_n)).$

\paragraph{Setup.}
The proof of the proposition will be mainly based on showing weak LLN-type behaviour for both processes $X^{(n)}_{\mu}$ and $X^{(n)}_{\nu}$ at large times. Taking inspiration from \cite{BK14}, we first switch to continuous time by using a ``Poissonization" technique: let $(\mathcal{P}_n(t),t\geq 0)$ be a Poisson-like process which starts at $0$ and increases by $1$ periodically, the waiting time for the $k$-th jump being exponentially distributed with rate parameter $\sqrt{nX^{(n)}_{\mu}(k)}.$ Now set, for all $t\geq0$, $$\mathcal{X}^{(n)}_{\mu}(t)=-\log X^{(n)}_{\mu}(\mathcal{P}_n(t)).$$ This forms a Markov process: when at position $\log (2n+1) -\log (2m+1)$ for $m\geq 1$, it waits an amount of time which is exponentially distributed with parameter $m^{1/2}$ and then jumps to $\log (2n+1) -\log (2a+1),$ for $a\in\{0,\ldots,m-1\},$ with probability $2P(a,m-1-a)C(a,m-1-a).$ Notice that $\mathcal{X}^{(n)}_{\mu}$ grows from $0$ to $\log(2n+1),$ since $X^{(n)}_{\mu}$  decreased from 1 to $\frac1{2n+1}$. Similarly, for all $t\geq0$, set $\mathcal{X}^{(n)}_{\nu}(t)=-\log X^{(n)}_{\nu}(\mathcal{P}_n(t))$.

Letting $\theta_n=\inf\{t: \Xmun(t) = \log (2n+1)\}$, our aim is to show that 
\begin{equation}\label{eq:loglimittimeprob0}
\frac{ \Xnun(\theta_n)}{\log n} \underset{n\to\infty}{\overset{\pr}\longrightarrow} \dH   
\end{equation}

To do this, we will first consider $\theta'_n=\inf\{t: \Xmun(t) = \log (2n+1)-K\}$ for a large $K$ that will be chosen later. Notice that, for any value of $K$, the distribution of $\theta_n-\theta'_n$ is tight as $n$ tends to infinity, because it is a mixture of the distributions of $(\theta_i)$ for $i$ in the finite set $\{1,\ldots,\lfloor \mathrm{e}^{K}\rfloor\}.$ Similarly, the distribution of $\Xnun(\theta_n)-\Xnun(\theta'_n)$ is also tight, as it is a mixture of the distributions of $\mathcal{X}^{(i)}(\theta_i)$ for $i\in \{1,\ldots,\lfloor \mathrm{e}^{K}\rfloor\}$. This implies that we only need to prove Equation \eqref{eq:loglimittimeprob0} with $\theta'_n$ replacing $\theta_n$:
\begin{equation}\label{eq:loglimittimeprob}
\frac{ \Xnun(\theta'_n)}{\log n} \underset{n\to\infty}{\overset{\pr}\longrightarrow} \dH.  
\end{equation}
We set out to prove \eqref{eq:loglimittimeprob}.

\paragraph{Analysis of $\Xmun$ and estimate of $\theta'_n$.} Here we show that $\Xmun(t)$ is mostly linear in $t$, and deduce that $\theta'_n$ is well approximated by $\frac{2\log n}{\sqrt{\pi}}.
$

Since $\mathcal{X}^{(n)}_{\mu}$ is a pure jump process, we can compute its predictable compensator. It is given by integrating the local drift $D_\mu^{(n)}(t)=\frac{1}{ \mathrm{d}t} \mathbb{E}[\mathcal{X}^{(n)}_{\mu}(t+\dd t)-\mathcal{X}^{(n)}_{\mu}(t)\mid \mathcal{F}_t]$ which is  equal to the rate of jumps, times the expectation of the next jump, that is
\[ D_{\mu}^{(n)}(t)=m^{1/2}\sum_{a=0}^{m-1} -\log\left(\frac{2a+1}{2m+1}\right) 2P(a,m-1-a)C(a,m-1-a), \]
with $m$ such that $\mathcal{X}^{(n)}_{\mu}(t)=\log n - \log m.$
We can apply Proposition \ref{prop:generalintegrallimit}, part $(ii)$, to this sum (since $\log (1/m)=o(\sqrt{m})$ and $x\log(x)$ tends to $0$ as $x$ tends to $0$), obtaining that it converges as $m$ tends to infinity to
\[\frac1{2\sqrt{\pi}}\int_0^1 -\log x \, \ff(x) \pp(x)\dd x=\frac{\sqrt{\pi}}{2}.\]
The fact that this is a limit as $m$ tends to infinity is useful because the drift will be close to this value at any $t$, as long as $m$ is big (i.e., as long as $\mathcal{X}_n(t)< \log(n)-K$ for large $K$). By definition of the predictable compensator, the process $(M^{(n)}_{\mu}(t),t\geq 0)$ defined by 
\[M^{(n)}_{\mu}(t)=\Xmun(t)-\int_0^t D_{\mu}(t)\mathrm dt,\]
is a $(\mathcal{F}_t : t \geq 0)$-martingale. Its quadratic variation is given by integrating the rate of jumps, times the expectation of the square of the next jump, which is
\[V_{\mu}^{(n)}(t)=m^{1/2}\sum_{a=0}^{m-1} \left(\log\left(\frac{2a+1}{2m+1}\right)\right)^2 2P(a,m-1-a)C(a,m-1-a),\]
where $m$ is still defined by  $\mathcal{X}^{(n)}_{\mu}(t)=\log n - \log m.$ Again by Proposition \ref{prop:generalintegrallimit}, this converges as $m \to \infty$ to
\[\frac1{2\sqrt{\pi}}\int_0^1 (\log x)^2\ff(x)\pp(x)\dd x.\]
In particular, this means that that $B:=\underset{n,t}\sup \, V^{(n)}_{\mu}(t)<\infty.$

\vspace{1cm}

Since $M^{(n)}_{\mu}(0)=0$, we have
\[\E[M^{(n)}_{\mu}(t)^2]=\E\left[\int_{0}^t V_{\mu}(t)\mathrm d t\right]\leq Bt,\]
implying via Doob's inequality that
\[\pr[\underset{u\leq t} \sup \, |M^{(n)}_{\mu}(u)|>t^{0.6}]\leq \frac{Bt}{t^{1.2}} \underset{t\to\infty}\longrightarrow 0, \]
uniformly in $n$.

Let now $\veps>0,$ and choose $K$ large enough to have $|D_{\mu}(t)-\frac{\sqrt{\pi}}{2}|\leq \frac{\veps}{2}$ whenever $\Xmun(t)\leq \log n - K.$ Consider $t_n=\frac{2(1-\veps)\log n}{\sqrt{\pi}}$ and let $E_n$ be the event that $\underset{t\leq t_n}\sup \,|M^{(n)}_{\mu}(t) |\leq t_n^{0.6}$. We claim that, for $n$ large enough, on $E_n$, we have $\theta'_n\geq t_n.$ To see this, notice that, on $E_n$, we have for all $t\leq t_n$
\[\Xmun(t)=\int_0^t D^{(n)}_{\mu}(s)\mathrm ds + O(\log(n)^{0.6}) \]
However, if $\theta'_n\leq t_n$, then for $n$ large enough we would have
\begin{align*}
\log(n)-K &\leq \Xmun(\theta'_n)\\
&\leq \frac{\sqrt{\pi}}{2}(1+\frac{\veps}{2})\theta'_n +O(\log(n)^{0.6}) \\
   &\leq (1+\frac{\veps}{2})(1-\veps)\log n +O(\log(n)^{0.6}),
\end{align*}
which eventually gives a contradiction. Put together, this means that $\pr[\theta'_n\geq \frac{(1-\veps)\log n}{A}]\geq \pr[E_n] \to 1.$

Let now $u_n=\frac{2(1+\veps)\log n}{\sqrt{\pi}},$ and this time let $F_n$ be the event that $\underset{t\leq u_n}\sup| M^{(n)}_{\mu}(t)| \leq u_n^{0.6},$ whose probability also tends to $1$. Let us now check that, on $F_n$, $\theta'_n\leq u_n.$ Similarly to before, observe that, on $F_n$, we have for all $t\leq u_n$ that 
\[\Xmun(t)=\int_0^t D^{(n)}_{\mu}(s)\mathrm ds + O(\log(n)^{0.6}). \]
Thus, if we had $\Xmun(u_n)< \log(n)-K,$
then we would have $V^{(n)}_{\mu}(s)\geq A(1-\frac{\veps}{2})$ for all $s\leq u_n,$ and in particular
\[
\log n \geq \Xmun(u_n)\geq \frac{\sqrt{\pi}}{2}(1-\frac{\veps}{2})\frac{2(1+\veps)\log n}{\sqrt{\pi}}+ O((\log n)^{0.6}),\]
a contradiction for $n$ large enough. Hence $\pr[\theta'_n\leq \frac{2(1+\veps)\log n}{\sqrt{\pi}}]\geq \pr[F_n]\rightarrow 1.$

\paragraph{Analysis of $\Xnun$ and conclusion.}
We can use the same method to establish the first order linear behaviour of $\Xnun$. Keeping the notation $\Xmun(t)=\log (2n+1) - \log (2m+1)$, we have that $\Xnun$ jumps at rate $m^{1/2},$ and if $\Xmun$ jumps to $\log (2n+1) - \log (2a+1)$, then the value of $\Xnun$ increases by $\log(C(a,b))$. Hence
\[ D_{\nu}(t)=m^{1/2}\sum_{a=0}^{m-1} \log (C(a,b)) 2P(a,b)C(a,b), \]
the limit of which is
\[\frac1{2\sqrt{\pi}}\int_{0}^1 \ff(x)^2\pp(x)\dd x=\frac{\sqrt{\pi}}{2}\gamma.\]
Similarly, the local variance is then
\[ V_{\nu}^{(n)}(t)=m^{1/2}\sum_{a=0}^{m-1} \log(C(a,b))^2 2P(a,b)C(a,b), \]
the limit of which is
\[\frac1{2\sqrt{\pi}}\int_{0}^1 \log(\ff(x))^2\ff(x)\pp(x)\dd x,\]
hence we have $V_{\nu}(t)\leq B$ for all $t$, up to changing $B.$

We consider the martingale $(M^{(n)}_{\nu}(t),t\geq 0)$ defined by 
\[M^{(n)}_{\nu}(t)=\Xmun(t)-\int_0^t D_{\nu}(t)\mathrm dt.\]
The same arguments as for $M^{(n)}_{\nu}$ show that, if $G_n$ is the event that $\underset{t\leq u_n}\sup |M^{(n)}_{\nu}(t)| \leq u_n^{0.6},$ then $\pr[G_n]$ tends to $1$. Up to increasing $K$, we can moreover assume that $|D_{\nu}(t)-\frac{\sqrt{\pi}}{2}\gamma|\leq \veps$ for $t\leq \theta'_n.$ Then on the event $G_n\cap\{|\theta'_n-\frac{2}{\sqrt{\pi}}\log(n)|\leq \frac{2}{\sqrt{\pi}}\veps\log(n)\},$ which has probability tending to $1$, we have
\[|\Xnun(\theta'_n)-\frac{\sqrt{\pi}}{2}\gamma\theta'_n|\leq \frac{\veps}{2}\theta'_n+O((\log n)^{0.6}),\]
from which \eqref{eq:loglimittimeprob} readily follows.
\end{proof}

\subsection{Moments of the leaf-growth mass of a random leaf: the multifractal spectrum}\label{section:multifractal spectrum}
In contrast to the result of the previous section, we now highlight the multifractality phenomenon governing the behaviour of the random variable $M_n$ and proceed to compute the full discrete multifractal spectrum of the measure $\nu_{\tau_n}$.

Our aim within this section will be to estimate all moments $\E[M_n^\alpha]$ for $\alpha\in\R$; by the end, we shall prove Theorem~\ref{thm:discrete multifractal spectrum}. Before we do, let us give a brief insight into the appearance of the integral $I(\alpha,\beta)$ from \eqref{eq:I(alpha,beta)}; we shall then show that, given $\alpha\in\R$, there is a unique $\beta(\alpha)$ such that $I(\alpha,\beta(\alpha))=0$. Finally, we dive into a formal proof of Theorem~\ref{thm:discrete multifractal spectrum}.

In order to compute $\E[M_n^\alpha]$, we can disintegrate according to the profile of $\tau_n$; indeed, letting $\tau_n^L$ and $\tau_n^R$ be the left and right subtrees, conditionally on $|\tau_n^L|=a$, the leaf $L_n$ is distributed according to $\nu_{\tau_n^L}$ among the $a+1$ leaves of $\tau_n^L$ with probability $C(a,b)$ (where $b=n-1-a$), and distributed according to $\nu_{\tau_n^R}$  among the $b$ leaves of $\tau_n^R$ with probability $1-C(a,b)$. In the first case, the mass $M_n$ has the distribution of $C(a,b)M_a$, and in the second case it is distributed as $C(b,a)M_{b}$. This yields
\[\E[M_n^\alpha]=\sum_{a=1}^{n-1}P(a,b)\left(C(a,b)^{\alpha+1}\E[M_a^\alpha]+C(b,a)^{\alpha+1}\E[M_b^\alpha]\right).\]
Since we have $\sum_{a=1}^{n-1}P(a,b)=1$, we can rewrite this as
\[\sum_{a=0}^{n-1}P(a,b)\left(C(a,b)^{\alpha+1} \E[M_a^\alpha]+C(b,a)^{\alpha+1} \E[M_b^\alpha]-\E[M_n^\alpha]\right)=0.\]

Now, if we informally assume an asymptotic of the form $\E[M_n^\alpha] \approx n^{-\beta}$, where $\beta$ is some positive real number, and recall that $C(a,b)$ is close to $\ff(\frac{a}{n}),$ we obtain that
\[\sum_{a=1}^{n-2}P(a,b)\left(\ff(a/n)^{\alpha+1} (a/n)^{-\beta}+\left(1-\ff(a/n)\right)^{\alpha+1} \left(b/n\right)^{-\beta}-1\right)\]
should be close to $0$. However, similarly to Proposition \ref{prop:generalintegrallimit}, we expect to this sum converge to $\frac{1}{4\sqrt{\pi}} I(\alpha,\beta),$ hence if $\E[M_n^\alpha] \approx n^{-\beta}$ then we should have $I(\alpha,\beta)=0$.

Let us now consider the problem of determining $\beta(\alpha)$ as a function of $\alpha$:
\begin{lem}
For $\alpha\in\R$, the quantity
\[I(\alpha,\beta)=\int_0^1 \left(\ff(x)^{\alpha+1} x^{-\beta}+\ff(1-x)^{\alpha+1} (1-x)^{-\beta}-1\right)\pp(x)\dd x\]
is finite for all $\beta \in (-\infty,2\alpha+\frac{3}{2}).$ It is continuous and strictly increasing as a function of $\beta$, with limits
$$\lim_{\beta\to -\infty}I(\alpha,\beta)=-\infty\mbox{ and }\lim_{\beta\to2\alpha+\frac{3}{2}^-}I(\alpha,\beta)=+\infty.$$
\end{lem}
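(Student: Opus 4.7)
The strategy splits into three steps: identifying the range of $\beta$ for which $I(\alpha,\beta)$ converges, establishing continuity and strict monotonicity via differentiation under the integral sign, and reading off the two limits via monotone convergence. Throughout, the key facts are the Taylor expansions $\ff(x)=3x^2-2x^3$ and $\ff(1-x) = 1-3x^2+2x^3$ near $x=0$, combined with $\pp(x)\sim x^{-3/2}$.

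\emph{Finiteness.} I analyse the integrand at the two singular endpoints. Near $x=0$, the first summand satisfies $\ff(x)^{\alpha+1} x^{-\beta}\pp(x)\sim 3^{\alpha+1}x^{2\alpha+1/2-\beta}$, which is integrable iff $\beta<2\alpha+\tfrac32$. For the second summand I write $\ff(1-x)^{\alpha+1}(1-x)^{-\beta} = (1-x)^{2\alpha+2-\beta}(1+2x)^{\alpha+1}$ and expand to obtain $1+\beta x+O(x^2)$; after cancellation with the $-1$ in the integrand and multiplication by $\pp(x)\sim x^{-3/2}$, this contributes $O(x^{-1/2})$, which is integrable. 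The behaviour at $x=1$ is symmetric under $x\leftrightarrow 1-x$. Hence $I(\alpha,\beta)$ is absolutely convergent exactly on $\beta<2\alpha+\tfrac32$.

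\emph{Continuity and strict monotonicity.} Differentiating the integrand pointwise in $\beta$ gives
\begin{equation*}
\bigl[-\log(x)\,\ff(x)^{\alpha+1}x^{-\beta} - \log(1-x)\,\ff(1-x)^{\alpha+1}(1-x)^{-\beta}\bigr]\pp(x),
\end{equation*}
which is strictly positive on $(0,1)$. To bring the derivative inside the integral I fix any compact sub-interval $[\beta_-,\beta_+]\subset(-\infty,2\alpha+\tfrac32)$ and dominate both the integrand and its $\beta$-derivative by a $\beta$-independent integrable function, using the expansions of Step 1 with $\beta_+$ in place of $\beta$ and absorbing the logarithmic factors into an arbitrarily small additional power of $x$ (respectively $1-x$) near the endpoints. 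Dominated convergence then yields $\partial_\beta I(\alpha,\beta)>0$, hence the claimed continuity and strict monotonicity in $\beta$.

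\emph{Limits.} Since the integrand is pointwise monotone increasing in $\beta$, monotone convergence applies on both sides. As $\beta\to-\infty$, both $x^{-\beta}$ and $(1-x)^{-\beta}$ decrease to $0$ on $(0,1)$, so the integrand decreases to $-\pp(x)$; since $\int_0^1\pp(x)\,\dd x=+\infty$, we obtain $I(\alpha,\beta)\to-\infty$ (using the integrand at any fixed $\beta_0<2\alpha+\tfrac32$ as integrable upper bound). As $\beta\uparrow 2\alpha+\tfrac32$, the first summand $\ff(x)^{\alpha+1}x^{-\beta}\pp(x)\sim 3^{\alpha+1}x^{2\alpha+1/2-\beta}$ has integral over $(0,\tfrac12)$ blowing up to $+\infty$, while the remaining contributions stay bounded below by the value at any $\beta_0$ in the allowed range; monotone convergence gives $I(\alpha,\beta)\to+\infty$. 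The main obstacle in this plan is bookkeeping rather than ideas: the individual summands $\ff(x)^{\alpha+1}x^{-\beta}\pp(x)$ and $\ff(1-x)^{\alpha+1}(1-x)^{-\beta}\pp(x)$ are not integrable separately (they fail at the opposite endpoint), so each must be grouped with the $-1$ term before any limiting procedure is applied.
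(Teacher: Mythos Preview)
Your proof is correct and follows essentially the same approach as the paper: analyse integrability of the integrand at the endpoints via the expansions $\ff(x)\sim 3x^2$ and $\ff(1-x)^{\alpha+1}(1-x)^{-\beta}=1+\beta x+O(x^2)$, then exploit pointwise monotonicity of the integrand in $\beta$ together with monotone convergence for the limits. The only difference is cosmetic: where you differentiate under the integral sign to obtain continuity and strict monotonicity, the paper simply notes that these follow from the corresponding properties of the integrand and the monotone convergence theorem (pointwise strict monotonicity gives strict monotonicity of the integral, and one-sided continuity follows from MCT applied to monotone sequences $\beta_n\to\beta$), which is slightly more economical but amounts to the same thing.
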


\begin{proof}
By symmetry, we only need to check what happens when $x$ is close to $0$. Since $\ff(1)=1$ and $\ff$ is differentiable, we have $\ff(1-x)^{\alpha+1} (1-x)^{-\beta}-1 \sim Ax$ for some $A\in\R$, hence $(x(1-x))^{-3/2}(\ff(1-x)^{\alpha+1} (1-x)^{-\beta}-1)$ is integrable, while since $\ff(x)\sim x^2,$ $(x(1-x))^{-3/2}\ff(x)^{\alpha+1}x^{-\beta}\sim x^{2\alpha-\beta+1/2}$ is integrable only if $\beta<2\alpha+\frac{3}{2}.$

Monotonicity, continuity and limit properties are direct consequences of the same properties for the integrand and the monotone convergence theorem.
\end{proof}

It follows that, for each $\alpha\in\R$, there is a unique $\beta(\alpha)$ such that $I(\alpha,\beta(\alpha))=0$.

\begin{proof}[Proof of Theorem~\ref{thm:discrete multifractal spectrum}]Fix $\alpha\in\R$, let $\beta=\beta(\alpha)$ as well as $e_n=\E[M_n^\alpha]$. We will prove that, for all $\veps>0,$ $e_n=O(n^{-\beta+\veps})$ and $e_n=\Omega(n^{-\beta-\veps}),$ from which the result follows straightforwardly.

We start with the upper bound. For $n\in\N,$ let $C_n=\underset{0\leq k\leq n}\sup \,e_k (k+1)^{\beta-\veps};$ we aim to show by induction that $C_n$ stays bounded as $n$ tends to infinity. Recall the equation
\[e_n=\sum_{a=0}^{n-1}P(a,b)\left(C(a,b)^{\alpha+1}e_a+C(b,a)^{\alpha+1}e_b\right)\]
where $b$ is implicitly always equal to $n-1-a.$ We then have
\[e_n\leq C_{n-1} \sum_{a=0}^{n-1}P(a,b)\left(C(a,b)^{\alpha+1}(a+1)^{-\beta+\veps}+C(b,a)^{\alpha+1}(b+1)^{-\beta+\veps}\right).\]
Using the fact that $\sum_{a=0}^{n-1} P(a,b)=1,$ we can now write
\[\frac{e_n}{C_{n-1}(n+1)^{-\beta+\veps}}\leq 1+ \sum_{a=0}^{n-1}P(a,b)\left(C(a,b)^{\alpha+1}\left(\frac{a+1}{n+1}\right)^{-\beta+\veps}+C(b,a)^{\alpha+1}\left(\frac{b+1}{n+1}\right)^{-\beta+\veps}-1\right).\]
While the sum on the RHS is not of the type featured in Proposition \ref{prop:generalintegrallimit}, it is reasonable to expect that
\begin{equation}\label{eq:annoyingintegrallimit}
    \lim_{n\to\infty} \sqrt{n}\sum_{a=0}^{n-1}P(a,b)\left(C(a,b)^{\alpha+1}\left(\frac{a+1}{n+1}\right)^{-\beta+\veps}+C(b,a)^{\alpha+1}\left(\frac{b+1}{n+1}\right)^{-\beta+\veps}-1\right)=\frac1{4\sqrt{\pi}}I(\alpha,\beta-\veps).
\end{equation}
We postpone checking \eqref{eq:annoyingintegrallimit} to the end of this proof.

Now, noting that $I(\alpha,\beta-\veps)<0,$ we then get that, for $n$ large enough,
\[\frac{e_n}{C_{n-1}n^{-\beta+\veps}}\leq 1-\frac{A}{\sqrt{n}} \]
for some $A>0,$ hence
$e_n\leq C_{n-1}(1-\frac{A}{\sqrt{n}})(n+1)^{-\beta+\veps}\leq C_{n-1}(n+1)^{-\beta+\veps}.$
This shows that $C_{n+1}=C_n,$ and so the sequence $(C_n,n\in\N)$ is eventually constant.

\vspace{0.5cm}

The lower bound is proved the same way: letting $c_n=\underset{k\leq n}\inf \,e_k (k+1)^{\beta+\veps},$ we obtain
\[\frac{e_n}{c_{n-1}(n+1)^{-\beta-\veps}} \geq 1 + \frac1{4\sqrt{\pi n}}I(\alpha,\beta+\veps) + o(\frac1{\sqrt{n}}),\]
and since $I(\alpha,\beta+\veps)>0$ this implies that the sequence $(c_n,n\in\N)$ is eventually constant, completing our proof.

\vspace{0.5cm}
We now prove statement \eqref{eq:annoyingintegrallimit}. We will use bounds \eqref{Stirlingbound} and \eqref{eq:boundc1} as well as
\[\frac1{3} \cc{\left(\frac{a}{n}\right)}\leq C(a,b)\leq 3 \cc{\left(\frac{a}{n}\right)},\]
for $a+b=n-1.$ As with Proposition \ref{prop:generalintegrallimit}, it is sufficient to only consider
\[\sqrt{n}\sum_{a=2}^{\lfloor n/2 \rfloor}P(a,b)\left(C(a,b)^{\alpha+1}\left(\frac{a+1}{n+1}\right)^{-\beta+\veps}+C(b,a)^{\alpha+1}\left(\frac{b+1}{n+1}\right)^{-\beta+\veps}-1\right).\]
This sum is equal to $\int_0^{1/2} g_n(x)\mathrm{d}x,$ where 
\[g_n(x)= n^{3/2}P(a,b)\left(C(a,b)^{\alpha+1}\left(\frac{a+1}{n+1}\right)^{-\beta+\veps}+C(b,a)^{\alpha+1}\left(\frac{b+1}{n+1}\right)^{-\beta+\veps}-1\right)\] if $x\in[\frac{a-1}{n},\frac{a}{n})$ with $a\in \{2,\ldots,\lfloor n/2 \rfloor\},$ and $g_n(x)=0$ otherwise. It is readily checked that $g_n(x)$ converges pointwise towards $\frac{1}{4\sqrt{\pi}} \pp(x)\left(\ff(x)^{\alpha+1} x^{-\beta+\veps}+\ff(1-x)^{\alpha+1} (1-x)^{-\beta+\veps}-1\right),$
and we need to check that the convergence is dominated. Notice the following, where the constant $K>0$ may vary from line to line:
\begin{itemize}
    \item Since $n^{3/2} P(a,b)\left(\frac{a}{n}\right)^{3/2}\left(\frac{b}{n}\right)^{3/2}$ is uniformly bounded in $a$ and $n$, and $x<\frac{1}{2},$ we have $n^{3/2}P(a,b)\leq K x^{-3/2}.$
    \item Since $\frac1{3}\ff(\frac{a}{n})\leq C(a,b)\leq 3\ff(\frac{a}{n})$ and $x\leq \frac{a}{n}\leq 2x,$ we have $\frac1{3}\ff(x)\leq C(a,b)\leq 3\ff(2x).$ Since moreover $x\leq\frac{a+1}{n+1}\leq 3x,$ we deduce that $C(a,b)^{\alpha+1} \left(\frac{a+1}{n+1}\right)^{-\beta+\epsilon}\leq K x^{2(\alpha+1)}x^{-\beta+\veps},$ independently of the signs of $\alpha+1$ and $\beta.$
    \item Similarly, we have $|1-C(b,a)|\leq Kx^2,$ and $\left|1-\frac{b+1}{n+1}\right|\leq Kx$ hence $\left|C(b,a)^{\alpha+1}\left(\frac{b+1}{n+1}\right)^{-\beta+\epsilon}-1\right| \leq K x.$
\end{itemize}

 Combining these together yields a constant K such that
 \[|g_n(x)|\leq \frac{K}x^{-3/2}\left(x^{2\alpha+2-\beta+\veps}+x\right),\]
 which is a uniform integrable bound on $(0,1/2).$
\end{proof}

\begin{rem}\label{rem:multi-fractal}It is easy to prove by induction on $n \geq 1$ that the smallest possible leaf-growth weight of a leaf in a binary tree of size $n$ is given by $C(0,n-1) \sim \frac{3}{2 n^2}$, which is the weight of a leaf attached to the root of such a tree. An interesting question raised by a referee is whether one could show that there exists a constant $s_{\max}$ such that 
$$ \max_{l \in \tau_n} \nu_{\tau_n}(l) = n^{-s_{\max} + o_{\mathbb{P}}(1)},$$ where $o_{\mathbb{P}}(1)$ is a function which tends to $0$ in probability. More generally, an alternative way to present the multi-fractal spectrum is to prove the existence of a function $\zeta$ such that 
$$ \nu_{\tau_n}(l \in \tau_n \mid \nu_{\tau_n}(l) = n^{-s+o(1)}) = n^{-\zeta(s)+o_{\mathbb{P}}(1)}.$$ Given the expression above, we would expect (but do not prove here) that $\zeta$ and $\beta$ should be related by 
$$ \beta(\alpha) = \inf_{s \geq 0} \big( s\cdot (\alpha+1) + \zeta(s)\big).$$
\end{rem}

\section{Towards a diffusion limit}
In this work we studied in details  the leaf-growth measure on large binary trees and their continuous limit. This is the first step towards understanding the possible  Markov process  on real trees that one would obtain by passing the discrete Luczak--Winkler growth procedure to the limit. In the following we heuristically call ``CRT dynamic" any Markov process with values in compact real trees with invariant measure given by the Brownian CRT. 

\bigskip

\paragraph{Stationary dynamics.} A natural way to define a CRT dynamic is to take a scaling limit of a Markov chain $(T^{(n)}_k : k \geq 1)$ on (say) the space of binary trees with fixed size $n$ with uniform  invariant distribution. There are many such chains: the flip dynamic \cite{mcshine1997mixing}, Aldous' move on cladograms \cite{lohr2020aldous,forman2023aldous}, subtree pruning and re-graft  \cite{EW06}... Passing those dynamics to the scaling limit as $n \to \infty$ requires first to understand the appropriate scaling in time which is related to the mixing time of the chain: we want to find $f(n)$ so that 
$$ \left( T^{(n)}_{[f(n)t]} , t \geq 0\right)$$ converges towards a CRT dynamic. When the Markov chain is itself reversible, the dynamic can even be extended to $ \mathbb{R}$. Finding $f(n)$ is usually a hard problem and has been the subject of many investigations in recent years. 

\paragraph{Growing dynamics.} Another route is to consider a growing chain  $(T_n , n \geq 1)$ on (say) the set of binary trees so that $T_n$ is uniformly distributed on plane binary trees of size $n$ for each $n \geq 1$ fixed. Those chains do not require to understand the mixing time since the time and size are intimately tight and the limit CRT-dynamic should be the scaling limit as $n \to \infty$ of
 \begin{eqnarray} \label{eq:dynamiCRT}  \left( \frac{T_{[n t]}}{ \sqrt{n t}},t\geq 0\right).\end{eqnarray}
%where we performed the time change $\mathrm{e}^s=t$ to get a two-sided dynamic.

The most well-known growth procedure is perhaps Rémy's algorithm \cite{Rem85}, where iteratively, a uniform edge of $T_n$ is split in its middle to yield $T_{n+1}$. When the choices of edges are made independently for each  $n$, the  resulting Markov chain converges almost surely towards a Brownian CRT \cite{CurienHaas}, so that the limiting CRT dynamic obtained in \eqref{eq:dynamiCRT} is constant (which is not so interesting). However, still in the R\'emy growth, there is a less well-known way of coupling the choices of edges, due to Bacher, Bodini and Jacquot \cite{bacher2017efficient}, which yields a different Markov chain, whose growth is more ``local". The scaling limits of this chain given in \eqref{eq:dynamiCRT} is under current investigation and is a CRT dynamic with ``local and continuous growth". 

The Luczak--Winkler leaf-growth mechanism studied in these pages is totally different from Rémy's algorithm. We conjecture that if $(T^{lw}_n, n \geq 1)$ is a sequence of uniform binary trees  so that conditionally on $ (T^{lw}_k, 1 \leq k \leq n)$, the tree $ T^{lw}_{n+1}$ is obtained by growing a cherry on point sample according to the leaf-growth measure on $ T^{lw}_n$,  then \eqref{eq:dynamiCRT} yields a non-trivial CRT-dynamic $( \mathcal{LW}_t, t\geq 0)$. This dynamic is introduced in \cite{CFT25} as a special case of a more general growing mechanism for self-similar Markov trees. The following proposition indicates that the limiting CRT dynamic is mixing:

\begin{prop}\label{prop:pairedindep}
Consider a sequence $(m_n,n\in\N)$ which tends to $\infty$ such that $m_n=o(n)$. We abuse notation and skip the $n$ index in the sequel. We have
\[ \left(\frac{1}{\sqrt{m}}T^{lw}_m,\frac{1}{\sqrt{n}}T^{lw}_n\right) \underset{n\to\infty}{\overset{(d)}\longrightarrow} (2\sqrt{2}\,\T,2\sqrt{2}\,\T'),\]
where $\T'$ is an independent copy of $\T$.
\end{prop}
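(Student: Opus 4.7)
Tightness of the joint law $(T^{lw}_m/\sqrt{m}, T^{lw}_n/\sqrt{n})$ is immediate from Theorem~\ref{thm:ghpanddim} applied to each marginal (note $m\to\infty$), so it suffices to identify every subsequential limit as the product of two independent CRT laws. Any subsequential limit has the form $(2\sqrt{2}\,\T, 2\sqrt{2}\,\hat{\T})$ for some coupling of two Brownian CRTs, and the content of the proposition is precisely that the coupling is trivial. The starting observation is that $T^{lw}_m$ embeds as a rooted subtree of $T^{lw}_n$ via the Luczak--Winkler growth coupling, and since $\mathrm{diam}(T^{lw}_m) = O_P(\sqrt{m})$ by Theorem~\ref{thm:ghpanddim}, its image inside $T^{lw}_n/\sqrt{n}$ has diameter $O_P(\sqrt{m/n}) = o_P(1)$. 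Thus the rescaled $T^{lw}_m$ collapses to the root of $T^{lw}_n/\sqrt{n}$ and becomes geometrically invisible at the macroscopic scale $\sqrt{n}$, which is the heuristic reason why independence should hold.

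\medskip

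To formalise this, I would pass to finite-dimensional reduced trees, which characterise GHP-convergence for CRT-type limits. Sample $U_1,\ldots,U_k$ uniformly on the vertices of $T^{lw}_m$ and independently $V_1,\ldots,V_l$ uniformly on the vertices of $T^{lw}_n$, and show that the rescaled spanned subtrees $R(\rho,U_1,\ldots,U_k)/\sqrt{m}$ and $R(\rho,V_1,\ldots,V_l)/\sqrt{n}$ converge jointly to the reduced subtrees of two independent Brownian CRTs. The key decomposition is $T^{lw}_n = T^{lw}_m \cup \bigsqcup_{i=1}^{m+1}\tau^{(i)}$, where $\tau^{(i)}$ is the random subtree of size $S_i$ grafted at the $i$-th leaf of $T^{lw}_m$, with $\sum_i S_i = n-m$. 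Since $m=o(n)$, with probability tending to $1$ each $V_j$ lands in some $\tau^{(i_j)}$, and at scale $\sqrt{n}$ all attachment leaves of $T^{lw}_m$ coincide with the root, so $R(\rho,V_1,\ldots,V_l)/\sqrt{n}$ depends on $T^{lw}_m$ only through the macroscopic graft-size distribution $(S_i/n)_i$ and the rescaled internal shapes of the relevant grafts $\tau^{(i_j)}$.

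\medskip

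The main obstacle is then to show that, conditionally on $T^{lw}_m$, this reduced tree converges in distribution to that of an unconditional Brownian CRT, with a limit law not depending on $T^{lw}_m/\sqrt{m}$. This breaks into two parts: (i) each relevant graft $\tau^{(i_j)}$ is, conditionally on its size, asymptotically a rescaled Brownian CRT---which should essentially follow from a re-application of Theorem~\ref{thm:ghpanddim}, since the leaf-growth dynamics restricted to a single graft again has a Luczak--Winkler-type structure; and (ii) the joint law of the macroscopic graft sizes and of the host graft $i_j$ of each $V_j$ has an asymptotic law which does not depend on the fine structure of $T^{lw}_m$. Part (ii) is the delicate step: the iterated leaf-growth defines a Polya-type reinforcement driven by the leaf-growth measure studied in Sections~\ref{section:building schemes}--\ref{section:multifractal spectrum}, supported on roughly $m^{\gamma}$ leaves of $T^{lw}_m$ by Theorem~\ref{thm:discretefractal}. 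Although the identity of these ``heavy'' leaves depends on the detailed shape of $T^{lw}_m$, one must show that their collective dynamical behaviour becomes asymptotically universal, governed only by the CRT shape of $T^{lw}_n/\sqrt{n}$. Once (i) and (ii) are combined, the conditional independence of the grafts given their sizes yields the required product limit.
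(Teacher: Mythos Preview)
Your decomposition $T^{lw}_n = T^{lw}_m \cup \bigsqcup_{i=1}^{m+1}\tau^{(i)}$ and the observation that $T^{lw}_m$ collapses to the root at scale $\sqrt n$ are exactly the right starting point, and coincide with the paper's proof. However, you miss two simplifications which make the argument essentially immediate, and as a result your step (ii) is left as an unresolved obstacle.

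First, the grafts $\tau^{(i)}$ are \emph{exactly} independent uniform binary trees, conditionally on $T^{lw}_m$ and on the size vector $(S_1,\dots,S_{m+1})$. This is a direct consequence of the \emph{recursive} structure of the building scheme (Theorem~\ref{thm:defmeasurediscrete}): at every step the choice of which graft to grow depends only on the current size vector, and within the chosen graft the leaf is sampled according to its own leaf-growth measure, so each graft evolves as an autonomous Luczak--Winkler chain and is therefore uniform of its size. You allude to this at the very end (``conditional independence of the grafts given their sizes''), but you phrase (i) as an asymptotic statement requiring Theorem~\ref{thm:ghpanddim}; in fact it is exact and needs only the classical convergence of uniform binary trees to the CRT.

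Second --- and this is what dissolves your ``delicate'' step (ii) --- there is with high probability a \emph{single} macroscopic graft. Since $T^{lw}_n/\sqrt n$ converges to the CRT and the root of the CRT has degree $1$ almost surely, the ball of radius $O(\sqrt{m/n})$ around the root separates $T^{lw}_n$ into one component of $\mu$-mass $1-o(1)$ and many small ones. Hence there is a (random) index $I$ with $S_I/n\to 1$ in probability and $\max_{i\ne I}\mathrm{Height}(\tau^{(i)})=o_P(\sqrt n)$. The geometry of $T^{lw}_n/\sqrt n$ is then, up to $o_P(1)$ in Gromov--Hausdorff distance, that of $S_I^{-1/2}\tau^{(I)}$, which by the first point is a uniform binary tree independent of $T^{lw}_m$, of size $\sim n$. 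This gives the product limit directly.

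In particular, there is no need to analyse the Polya-type reinforcement, the multifractal support of $\nu_{\tau_m}$, or any ``universal collective behaviour'' of the heavy leaves: all of that is bypassed by the degree-$1$ property of the root. As written, your step (ii) is not carried out and constitutes the gap in the proposal; the paper's observation above fills it in one line.
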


\begin{proof} By construction, $T^{lw}_n$ can be seen as a tree obtained from  $T^{lw}_m$ by grafting trees at each of the $m+1$ leaves of $T^{lw}_m.$ More precisely, if we denote by $S_1(n,m),... , S_{m+1}(n,m)$ the sizes of the subtrees $( \theta_1(n,m), ... , \theta_{m+1}(n,m))$ grafted on the leaves of $T^{lw}_m$ to get $T^{lw}_n$, it should be clear from the dynamics that conditionally on the vector $(S_1(n,m), ... , S_{m+1}(n,m))$ and on $T^{lw}_m$, the random trees $(\theta_1(n,m), ... , \theta_{m+1}(n,m))$ are independent and uniform binary trees of sizes $(S_1(n,m), ... , S_{m+1}(n,m))$. Since the random tree $T^{lw}_m$ has diameter of order $ \sqrt{m} = o_{\mathbb{P}}(\sqrt{n})$ the root of the trees $\theta_1(n,m), ... , \theta_{m+1}(n,m)$ are almost confounded in $T^{lw}_n$. Since the root of the Brownian CRT is almost surely a point of degree $1$, it follows that as $n,m \to \infty$ satisfying $ \frac{n}{m} \to \infty$, then with high probability there exists a unique index $1 \leq I(n,m) \leq m+1$ so that $$n^{-1} \cdot S_{I(n,m)}(n,m) \to 1 \quad \mbox{and} \quad  n^{-1/2}\cdot \max_{i \ne I(n,m)} \mathrm{Height}(\theta_i(n,m)) \to 0,$$ in probability. In particular, the geometry of $ n^{-1/2}\cdot T^{lw}_n$ is close to that of $ S_{I(n,m)}(n,m)^{-1/2}\cdot \theta_{I(n,m)}(n,m)$ which converges towards a Brownian CRT, while being independent of $T^{lw}_m$, and hence the respective scaling limits are also independent. \end{proof} 
% }

Let us conclude this paper with the following remark: as in \cite{bacher2017efficient} for the Rémy growth, it seems possible in our case to \textit{couple} the sampling of the leaves in the leaf-growth mechanism to get even more ``local" growth process $(\tilde{T}_n^{lw} : n \geq 0)$ whose dynamical scaling limits seem easier to establish than that $({T}_n^{lw} : n \geq 0)$. We hope to address those questions in future work.

\section*{Appendix A: computing integral \eqref{leafheightintegral}}\label{sec: Appendix}
We are going to prove that, for all real $\alpha\geq 0$,
\begin{equation}\label{eq:Phi(alpha)}
\Phi(\alpha) = \frac{1}{\sqrt{2\pi}}\int_{0}^1 \left( 1-x^{\alpha}\ff(x)-(1-x)^{\alpha}\ff(1-x) \right) \pp(x) \dd x=2\sqrt{2}\alpha\frac{\Gamma(\frac{3}{2}+\alpha)}{\Gamma(2+\alpha)}.
\end{equation}
Recall that for $x, y \in \mathbb{C}$ with $\Re x, \Re y > 0$ the Beta function is defined by
\[
B(x,y) = \int_0^1 t^{x-1} (1-t)^{y-1} \dd t.
\]
and satisfies the fundamental equality 
\[
B(x,y) = \frac{\Gamma(x)\Gamma(y)}{\Gamma(x+y)}.
\]

Note that, since $\Gamma(1/2)=\sqrt{\pi}$, what we are claiming is that $\sqrt{\frac{\pi}{2}}\Phi(\alpha)=2\alpha B( \frac{3}{2}+\alpha, \frac{1}{2} )$. In order to show this, one can use the following lemma:

\begin{lem}\label{lemma:MysteriousPiece}
For $\alpha\in\mathbb{C}$ such that $\Re \alpha > \frac{1}{2}$, we have
\[
\int_0^1 (1-x^\alpha-(1-x)^\alpha) (x(1-x))^{-3/2} \dd x = 4\alpha \left( B\left(\alpha+\frac{1}{2}, \frac{1}{2}\right)  -B\left(\frac{3}{2}, \alpha-\frac{1}{2}\right) \right).
\]
\end{lem}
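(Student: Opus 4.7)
The proof naturally splits into an integration by parts followed by a Beta-function identification. The one non-obvious ingredient is an antiderivative of $(x(1-x))^{-3/2}$, so the first thing I would do is compute
$$\frac{\dd}{\dd x}\!\left[\frac{2(2x-1)}{\sqrt{x(1-x)}}\right] = \frac{1}{(x(1-x))^{3/2}},$$
which is a routine differentiation (the numerator simplifies to $\frac{4(x-x^2) + (2x-1)^2}{2(x(1-x))^{3/2}} = \frac{1}{(x(1-x))^{3/2}}$).

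Next, I would apply integration by parts on $(0,1)$ with $U(x) = 1 - x^\alpha - (1-x)^\alpha$ and $V(x) = \frac{2(2x-1)}{\sqrt{x(1-x)}}$. The boundary terms vanish precisely under the hypothesis $\Re\alpha > 1/2$: as $x \to 1$, we have $1 - x^\alpha = O(1-x)$, $(1-x)^\alpha = O((1-x)^\alpha)$, and $V(x) = O((1-x)^{-1/2})$, so $U(x)V(x) = O((1-x)^{1/2}) + O((1-x)^{\alpha-1/2}) \to 0$, and symmetrically at $x=0$. This leaves
$$\int_0^1 U(x) V'(x)\dd x = -\int_0^1 V(x) U'(x)\dd x = 2\alpha \int_0^1 \frac{(2x-1)\bigl(x^{\alpha-1}-(1-x)^{\alpha-1}\bigr)}{\sqrt{x(1-x)}}\dd x.$$

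The next step is a symmetrisation: the substitution $x \mapsto 1-x$ sends $\frac{(2x-1)(1-x)^{\alpha-1}}{\sqrt{x(1-x)}}$ to $-\frac{(2x-1)x^{\alpha-1}}{\sqrt{x(1-x)}}$, so the two terms contribute equally, giving
$$4\alpha \int_0^1 \frac{(2x-1)\,x^{\alpha-1}}{\sqrt{x(1-x)}}\dd x.$$
Now I write $2x-1 = x - (1-x)$ and split the integral into two Beta functions:
$$4\alpha\left[\int_0^1 x^{\alpha-1/2}(1-x)^{-1/2}\dd x - \int_0^1 x^{\alpha-3/2}(1-x)^{1/2}\dd x\right] = 4\alpha\bigl[B(\alpha+\tfrac12, \tfrac12) - B(\alpha-\tfrac12, \tfrac32)\bigr].$$
Applying $B(\alpha-\tfrac12, \tfrac32) = B(\tfrac32, \alpha-\tfrac12)$ yields the claimed identity.

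There is no real obstacle beyond finding the antiderivative $V(x)$ in the very first step; once it is in hand, everything else is bookkeeping. One small point worth checking carefully is the exact hypothesis on $\Re\alpha$: the boundary terms converge for $\Re\alpha > 1/2$ and the final Beta integral $B(\alpha-\tfrac12,\tfrac32)$ also requires $\Re\alpha > 1/2$, which matches the stated assumption.
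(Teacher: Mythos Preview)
Your proof is correct and follows essentially the same approach as the paper: the same antiderivative $V(x)=\frac{2(2x-1)}{\sqrt{x(1-x)}}$, the same integration by parts with vanishing boundary terms, and the same reduction to Beta functions. The only cosmetic difference is that you symmetrise via $x\mapsto 1-x$ before splitting $2x-1=x-(1-x)$, whereas the paper expands directly into four Beta integrals and then cancels two of them by symmetry; the content is identical.
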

\begin{proof}
    Let $F(x)=1-x^\alpha-(1-x)^\alpha$, $g(x)=(x(1-x))^{-3/2}$ and
    \[
        G(x) = \frac{2(2x-1)}{\sqrt{x(1-x)}}.
    \]

One can immediately check that $G(x)$ is an antiderivative of $g(x)$; 
    thus, integrating by parts and noticing that the boundary term $F(x)G(x)$ has limit equal to $0$ both as $x \to 0$ and $x \to 1$, we find that the integral in the statement is equal to
    \[
    \begin{aligned}
    -\int_0^1 F'(x) G(x) \dd x & =
\int_0^1 \left(\alpha x^{\alpha-1}-\alpha (1-x)^{\alpha-1}\right) \frac{2(2x-1)}{\sqrt{x(1-x)}} \dd x  \\
& = 2\alpha \int_0^1 \left( 2x^{\alpha} -x^{\alpha-1}-2x(1-x)^{\alpha-1} + (1-x)^{\alpha-1} \right) \frac{1}{\sqrt{x(1-x)}} \dd x \\
& = 2\alpha \left( 2B\left(\alpha+\frac{1}{2}, \frac{1}{2}\right) - B\left(\alpha-\frac{1}{2}, \frac{1}{2}\right) -2B\left(\frac{3}{2}, \alpha-\frac{1}{2}\right)+B\left(\frac{1}{2},\alpha-\frac{1}{2}\right) \right) \\
& = 2\alpha \left( 2B\left(\alpha+\frac{1}{2}, \frac{1}{2}\right)  -2B\left(\frac{3}{2}, \alpha-\frac{1}{2}\right) \right),
\end{aligned}
    \]
    where we have used the symmetry $B\left(\frac{1}{2},\alpha-\frac{1}{2}\right)=B\left(\alpha-\frac{1}{2}, \frac{1}{2}\right)$.
\end{proof}
Now, in order to compute \eqref{eq:Phi(alpha)}, we can just express $f_3(x)$ as $2x^2(1-x) + x^2$, thus decomposing $2\sqrt{\frac{\pi}{2}}\Phi(\alpha)$ as the sum of three integrals $I_1(\alpha), I_2(\alpha), I_3(\alpha)$:
\[
I_1(\alpha) := \int_0^1 -x^{\alpha} 2x^2 (1-x) x^{-3/2}(1-x)^{-3/2} \dd x = -2B\left(\frac{3}{2}+\alpha, \frac{1}2\right).
\]
\[
I_2(\alpha) := \int_0^1 -(1-x)^{\alpha} 2(1-x)^2 x x^{-3/2}(1-x)^{-3/2} \dd x = -2B\left(\frac{1}2, \frac{3}{2}+\alpha\right).
\]
\[
I_3(\alpha) = \int_0^1 (1-x^{\alpha+2}-(1-x)^{\alpha+2}) (x(1-x))^{-3/2} \dd x.
\]
Using Lemma \ref{lemma:MysteriousPiece} with parameter $\alpha+2$ (thus merely requiring the condition that $\Re \alpha > -3/2$), the quantity $I_3(\alpha)$ can be rewritten as
\[
I_3(\alpha) = 2(2\alpha+4) \left( B\left( \frac{5}{2}+\alpha, \frac{1}{2} \right)  -B\left(\frac{3}{2}, \frac{3}{2}+\alpha\right) \right).
\]
On the other hand, one has
\[
B\left( \frac{5}{2}+\alpha, \frac{1}{2} \right) = \frac{\Gamma\left( \frac{5}{2}+\alpha\right) \Gamma\left(\frac{1}{2} \right)}{\Gamma\left(3+\alpha\right)} = \frac{(\frac{3}{2}+\alpha) \Gamma\left( \frac{3}{2}+\alpha\right) \Gamma\left(\frac{1}{2} \right)}{(2+\alpha) \Gamma\left(2+\alpha\right)} = \frac{2\alpha+3}{2\alpha+4} B\left(\frac{3}{2}+\alpha, \frac{1}{2} \right)
\]
and
\[
B\left( \frac{3}{2}, \frac{3}{2}+\alpha \right) = \frac{\Gamma\left(\frac{3}{2} \right) \Gamma\left(\frac{3}{2}+\alpha \right)}{\Gamma\left( 3+\alpha \right)} =  \frac{\frac{1}{2}\Gamma\left(\frac{1}{2} \right) \Gamma\left(\frac{3}{2}+\alpha \right)}{(2+\alpha)\Gamma\left( 2+\alpha \right)} = \frac{1}{2\alpha+4} B\left(\frac{3}{2}+\alpha, \frac{1}{2} \right).
\]
Setting for simplicity $B:=B\left(\frac{3}{2}+\alpha, \frac{1}{2} \right)$, we have obtained
\[
2\sqrt{\frac{\pi}{2}}\Phi(\alpha) = I_1(\alpha) + I_2(\alpha) + I_3(\alpha) = -4B + 2(2\alpha+4)\left( \frac{2\alpha+3}{2\alpha+4}B-\frac{1}{2\alpha+4} B \right)=-4B + 2(2\alpha+2)B =4\alpha B,
\]
as desired.

\section*{Appendix B: limits of trees and subtrees}
% version HP
\begin{lem}\label{lemma:annoyingmetricthing}
Consider a sequence $((\T_n,\rho_n,\mu_n),n\in\N)$ of deterministic rooted $\R$-trees which are subsets of a fixed compact metric space $(E,d),$ such that $\T_n\to \T,$ $\rho_n\to \rho$ and $\mu_n\to\mu$ for respectively the Hausdorff metric $d_{\mathrm{H}}$ on compact subsets of $E$, the metric $d$ on $E$, and the Prokhorov metric $d_{\mathrm{P}}$ for Borel probability measures on $E$.
Then, for any $z\in\T$, there exists a sequence $(z_n,n\in\N)$ with $z_n\in T_n$ such that $z_n\to z$ and $\mu_n((\T_n)_{z_n}) \to \mu(\T_z)$.
\end{lem}

\begin{proof}
Write
\[\veps_n := 2  \max(d_{\mathrm{H}}(\T_n,\T) , d(\rho_n,\rho), d_{\mathrm{P}}(\mu_n,\mu)),\] this tends to $0$ as $n$ tends to infinity.

 Let $h$ be the height of $z$, for all $n$, $x_n$ an element of $\T_n$ such that $d(z_n,x_n)\leq \veps_n,$ and $y_n$ be the ancestor of $x_n$ in $\T$ with height $\mathrm{ht}(x_n)-\veps_n$,  Notice that $d(y_n,z)\leq 3\veps_n$

Let us show that $(\T_n)_{y_n}$ converges to $T_z$ for the Hausdorff metric. We use the notation $A^{\veps}$ to denote the $\veps$-enlargement of a subset $A$ of $E$ and will show that we have both $\T_z\subset (\T_n)_{y_n}^{\veps_n}$ and $(\T_n)_{y_n}\subset \T_z^{\veps'_n}$ with $\veps'_n\to 0.$ 

Let $a\in \T_z$, we know that there is $b\in \T_n$ such that $d(a,b)\leq \veps_n,$ and we will show that $b$ is in $(\T_n)_{y_n}.$ By the triangle inequality we have $d(b,x_n)\leq d(b,a)+d(a,z)+d(z,x_n)\leq d(a,z)+2\veps_n$ and $d(a,z)\leq d(b,x_n)+2\veps_n,$ hence $|d(b,x_n)-d(a,z)|\leq x\veps'_n.$ Similarly we obtain $|d(b,\rho_n)-d(a,\rho)|\leq 2\veps_n$ and $|d(x_n,\rho_n)-d(z,\rho)|\leq 3\veps_n$  Since $d(a,\rho)+d(a,z)+d(z,\rho)$, we deduce that $d(b,x_n)+d(x_n,\rho_n)-d(b,\rho_n)\leq 6\veps_n.$ Since $\T_n$ is an $\R$-tree, we can see that this difference is equal to twice the the distance from $x_n$ to its most recent common ancestor with $b$ is at most $3\veps_n$, hence this most recent common ancestor is a descendant of $y_n$, hence $b\in (\T_n)_{y_n},$ hence $\T_z\subset (\T_n)_{y_n}^{\veps_n}.$

Similarly, omitting the details, it can be seen that $(\T_n)_{y_n}\subset \T_{u_n},$ where $u_n$ is an ancestor of $z$ such that $d(u_n,z)\to 0.$ However, since $\T$ is compact, we deduce that $\T_{u_n}\subset \T_z^{\veps'_n},$ where $\veps'_n$ tends to $0$, otherwise there would be a sequence with no subsequential limits.

We can now finish the proof. Letting $\eta_n=\max(\veps_n,d_{\mathrm{H}}((\T_n)_{y_n},T_z))$ for all $n$, we have $\T_z\subset (\T_n)^{\veps_n}_{z'_n},$ we now finally define $z_n$ as the ancestor of $z'_n$ such that $\mathrm{ht}(z_n)=\mathrm{ht}(z'_n)-\eta_n.$ The same argument as in the previous paragraph shows that $(\T_n)_{z_n}$ converges to $\T_z$ for the Hausdorff distance, hence $\eta'_n:=\max(\eta_n,d_{\mathrm{H}}((\T_n)_{z_n},T_z))$ tends to $0$, and we have the following inclusions of sets:
\[\T_z\subset (\T_n)_{z'_n}^{\eta_n} \subset (\T_n)_{z_n} \subset \T_z^{\eta'_n}.\] By the definition of the Prokhorov metric on measures, we then have
\[\mu(\T_z)\leq \mu_n((\T_n)_{z_n})+\eta_n\leq \mu(\T_z^{\eta'_n})+\eta_n+\eta'_n.\]
However, since $\T_z$ is a closed set, $\mu(\T_z^{\eta'_n})$ converges to $\mu(\T_z)$, and by the sandwich theorem, so does $\mu_n((\T_n)_{z_n})$.
\end{proof}
\bibliographystyle{alpha}
\bibliography{frag}
\end{document}